\newtheorem{theorem}{Theorem}[section]
\newtheorem{proposition}[theorem]{Proposition}
\newtheorem{corollary}[theorem]{Corollary}
\newtheorem{remark}[theorem]{Remark}
\newtheorem{lemma}[theorem]{Lemma}
\theoremstyle{definition}
\newtheorem{definition}[theorem]{Definition}
\newtheorem{example}[theorem]{Example}
\newtheorem{exercise}[theorem]{Exercise}
\newtheorem{question}[theorem]{Question}
\newcommand{\calO}{\mathcal{O}}
\newcommand{\C}{\mathbb{C}}
\newcommand{\kbar}{\overline{k}}
\newcommand{\Z}{\mathbb{Z}}
\newcommand{\A}{\mathbb{A}}
\newcommand{\FF}{\mathbb{F}}
\newcommand{\PP}{\mathbb{P}}
\newcommand{\QQ}{\mathbb{Q}}
\newcommand{\R}{\mathbb{R}}
\newcommand{\glob}{\mathcal{O}}
\newcommand{\characteristic}{\operatorname{char}}
\newcommand{\Gr}{\operatorname{Gr}}
\newcommand{\Sym}{\operatorname{Sym}}
\newcommand{\Spec}{\operatorname{Spec}}
\newcommand{\GW}{\operatorname{GW}}
\newcommand{\Tr}{\operatorname{Tr}}
\newcommand{\Hom}{\operatorname{Hom}}
\newcommand{\disc}{\operatorname{disc}}
\newcommand{\grad}{\operatorname{grad}}
\newcommand{\type}{\operatorname{type}}
\newcommand{\ev}{\operatorname{ev}}
\newcommand{\gen}{\text{generic}}
\newcommand{\Nodes}{\operatorname{Nodes}}
\newcommand{\Hess}{\operatorname{Hess}}
\newcommand{\lra}[1]{\langle #1 \rangle}
\begin{document}
\title{Applications to $\A^1$-enumerative geometry of the $\A^1$-degree
}


\author{Sabrina Pauli         \and
        Kirsten Wickelgren 
}
\date{}



\maketitle

\begin{abstract}
These are lecture notes from the conference Arithmetic Topology at the Pacific Institute of Mathematical Sciences on applications of Morel's $\A^1$-degree to questions in enumerative geometry. Additionally, we give a new dynamic interpretation of the $\A^1$-Milnor number inspired by the first named author's enrichment of dynamic intersection numbers.  
\end{abstract}

\section{Introduction}
\label{intro}
$\A^1$-homotopy theory provides a powerful framework to apply tools from algebraic topology to schemes. In these notes, we discuss Morel's \emph{$\A^1$-degree}, giving the analog of the Brouwer degree in classical topology, and applications to enumerative geometry. Instead of the integers, the $\A^1$-degree takes values in bilinear forms, or more precisely, in the Grothendieck-Witt ring $\GW(k)$ of a field $k$, defined to be the group completion of isomorphism classes of symmetric, non-degenerate bilinear $k$-forms. This can result in an enumeration of algebro-geometric objects valued in $\GW(k)$, giving an $\A^1$-enumerative geometry over non-algebraically closed fields. One recovers classical counts over $\C$ using the rank homomorphism $\GW(k) \to \Z$, but $\GW(k)$ can contain more information. This information can record arithmetic-geometric properties of the objects being enumerated over field extensions of $k$. 

In more detail, we start with the classical Brouwer degree. We introduce enough $\A^1$-homotopy theory to describe Morel's degree and use the Eisenbud-Khimshiashvili-Levine signature formula to give context for the degree and a formula for the local $\A^1$-degree.  The latter is from joint work of Jesse Kass and the second-named author. A point of view on the classical Euler number is as a sum of local degrees. This in turn gives a point of view on an $\A^1$-Euler number \cite{2017arXiv170801175K} and enrichments of enumerative results. We give some due to Tom Bachmann, Jesse Kass, Hannah Larson, Marc Levine, Stephen McKean, Padma Srinivasan, Isabel Vogt, Matthias Wendt, and the authors. in Section \ref{section: Examples of enriched counts}. We describe joint work of Kass and the second named author on $\A^1$-Milnor numbers in Section \ref{section:A1Milnornumbers}.

Inspired by the first named author's enriched theory of dynamic intersection, we then give a new interpretation of the $\A^1$-Milnor number. See Section \ref{subsection: dynamic interpretation}, Theorems \ref{A1-Milnor_number_sum_A1-Milnor_numbers_bifurcations} and \ref{A1-Milnor_number_sum_node-types}.

Finally, we discuss joint work in progress of Kass, Levine, Jake Solomon and the second named author on the degree of a map of smooth schemes (as opposed to of a map between $\A^1$-spheres) and counts rational curves plane curves of degree $d$ through $3d-1$ points.

\section{Motivation from classical homotopy theory}
\label{section: Motivation from classical homotopy theory}
\subsection{The Brouwer degree}
Let $S^n=\{(x_0,\dots,x_n)\in\mathbb{R}^{n+1}: \sum_{i=0}^nx_i=1\}$ be the $n$-sphere. 
Since $S^n$ is \emph{orientable}, its top homology group $H_n(S^n)$ is isomorphic to $\Z$. Hence, a map $f:S^n\rightarrow S^n$ induces a homomorphism $f_*:\Z\rightarrow \Z$. For a choice of generator $\alpha$ of $H_n(S^n)\cong \Z$ (which is equivalent to choosing an orientation of $S^n$), it follows that $f_*(\alpha)=d\alpha$. The integer $d$ is called the \emph{Brouwer degree} of $f$. Two homotopic maps $f,g:S^n\rightarrow S^n$ have the same Brouwer degree and it turns out that the Brouwer degree establishes an isomorphism between homotopy classes of pointed maps $S^n\rightarrow S^n$ and the integers
\[\deg:[S^n,S^n]\xrightarrow{\cong} \Z.\]
\begin{remark}
Note that $S^n$ is homotopy equivalent to $\PP^n(\mathbb{R})/\PP^{n-1}(\mathbb{R})$. Later in the $\A^1$-homotopy version of the Brouwer degree, $S^n$ will be replaced by the 'quotient' of schemes $\PP^n/\PP^{n-1}$.
\end{remark}
\subsection{The Brouwer degree as a sum of local degrees}
\label{subsection: Brouwer degree as sum of local degrees}

Assume $p\in S^n$ such that $f^{-1}(p)=\{q_1,\dots,q_m\}$. Then the Brouwer degree $\deg f$ can be expressed as a sum of local degrees as follows: Let $V$ be a small ball around $p$ and $U$ a small ball around $q\in \{q_1,\dots,q_m\}$ such that $f^{-1}(p)\cap V=\{q\}$. The quotient spaces $U/(U\setminus \{q\})\simeq U/(U\setminus \partial U)$ and $V/(V\setminus\{p\})\simeq V/\partial V$ are homotopy equivalent to $S^n$. Let $$\bar{f}:S^n\simeq U/(U\setminus \{q\})\rightarrow V/(V\setminus\{p\})\simeq S^n$$ be the map of spheres induced by $f$ under orientation preserving homotopy equivalences $S^n\simeq U/(U\setminus\{q\})$ and $V/(V\setminus \{p\})\simeq S^n$. We define the local degree $\deg_qf$ of $f$ at $q$ to be the Brouwer degree of $\bar{f}$
\[\deg_qf:=\deg \bar{f}.\]
If $p$ is a regular value, then $f$ is a local homeomorphism and $\bar{f}$ is a homeomorphism. It follows that $\deg_qf\in\{\pm 1\}$. More precisely, $\deg_q f$ is $+1$ when $\bar{f}$ is orientation preserving and $-1$ when $\bar{f}$ is orientation reversing. Consequently, it is often easier to compute $\deg f$ as a sum of local degrees, especially because we have the following formula for the local degree from differential topology.

\subsubsection{A formula from differential topology}
\label{subsection: formula from differential topology}
Let $x_1,\dots,x_n$ be oriented coordinates near $q$ and $y_1,\dots,y_n$ be oriented coordinates near $p$.
In these coordinates, $f$ is given by $f=(f_1,\dots,f_n):\R^n\rightarrow \R^n$. Define the \emph{jacobian element} at $q$ by $Jf(q):= \det (\frac{\partial f_i}{\partial x_j})$. Then
\begin{equation}
\label{equation: def jacobian}
\deg_q(f)=\begin{cases*}
      +1 & if $Jf(q)>0$ \\
      -1        & if $Jf(q)<$ 0.
    \end{cases*}
\end{equation}

Later, we will define the local $\A^1$-degree which will record $Jf$ up to multiplication by squares, that is, the image of $Jf$ in $k^{\times}/(k^{\times})^2$ for an arbitrary field $k$. Note that for $k=\R$ this agrees with \eqref{equation: def jacobian} since $\R^{\times}/(\R^{\times})^2\cong\{\pm1\}$.
To be more precise, we first discuss the Grothendieck--Witt ring.

\section{The Grothendieck-Witt ring of $k$}
\label{section: GW(k)}
\subsection{Symmetric bilinear forms}

Let $R$ be a commutative ring and $P$ a finitely generated projective $R$-module.
A \emph{symmetric bilinear form} on $P$ over $R$ is a bilinear map $$b:P\times P\rightarrow R$$ such that $b(u,v)=b(v,u)$ for all $u,v\in P$. 
Let $P^*:=\operatorname{Hom}_R(P,R)$. The form $b$ is \emph{non-degenerate} if for all $u\in P$ the map $P\rightarrow P^*$, $u\mapsto b(-,u)$ is an isomorphism. 

Two symmetric bilinear forms $b_1:P_1\times P_1\rightarrow R$ and $b_2:P_2\times P_2\rightarrow R$ are \emph{isometric} if there is a $R$-linear isomorphism $\phi:P_1\rightarrow P_2$ such that $b_2(\phi(u),\phi(v))=b_1(u,v)$ for all $u,v\in P_1$.
This is an equivalence relation.

The \emph{direct sum} of two (non-degenerate) symmetric bilinear forms $b_1:P_1\times P_1\rightarrow R$ and $b_2:P_2\times P_2\rightarrow R$ is the (non-degenerate) symmetric bilinear form 
\[b_1\oplus b_2:P_1\oplus P_2\rightarrow R \text{, }((x_1,x_2),(y_1,y_2))\mapsto b_1(x_1,y_1)+b_2(x_2,y_2).\]
The \emph{tensor product} of $b_1$ and $b_2$ is the (non-degenerate) symmetric bilinear form
\[b_1\otimes b_2:P_1\otimes P_2\rightarrow R\text{, }((x_1\otimes x_2),(y_1\otimes y_2))\mapsto b_1(x_1,y_1)b_2(x_2,y_2).\]

The set of isometry classes of finite rank non-degenerate symmetric bilinear together with the direct sum $\oplus$ and the tensor product $\otimes$ forms a \emph{semi-ring}.

\subsubsection{Over a field $k$}
If $R=k$ is a field, then $P=V$ is a finite dimensional vector space over $k$. 
We call $n=\dim_k V$ the \emph{rank} of the symmetric bilinear form $b$. For a chosen basis $v_1,\dots,v_n$ of $V$ the associated \emph{Gram matrix} with entries $b(v_i,v_j)$ of $b$ is symmetric. 
Any symmetric bilinear form can be \emph{diagonalized} meaning that there exists a basis $v_1,\dots,v_n$ of $V$ such that the Gram matrix $b(v_i,v_j)$ is diagonal. Furthermore, a 
symmetric bilinear form over $k$ is non-degenerate if and only if the determinant of the Gram matrix is non-zero.

\begin{remark}
For $x\in V$, $q:V\rightarrow k$ defined by $q(x)=b(x,x)$ is a \emph{quadratic form}. Conversely, if $\operatorname{char} k\neq 2$ a quadratic form $q:V\rightarrow k$ gives rise to the symmetric bilinear for $b(x,y)=\frac{1}{2}(q(x+y)-q(x)-q(y))$.
\end{remark}

\subsection{Group completion}
Let $M$ be a \emph{commutative monoid}. The \emph{Grothendieck group} $K(M)$ of $M$ is the abelian group defined by the following universal property: There is a monoid homomorphism $i:M\rightarrow K(M)$ such that for any monoid morphism $m:M\rightarrow A$ to an abelian group $A$ there exists a unique group homomorphism $p:K(M)\rightarrow A$ such that $m=p\circ i$.

\[
\begin{tikzcd}
M\arrow{dr}{i}\arrow{rr}{m}&& A\\
&K(M)\arrow[dotted]{ur}{\exists!p}&
\end{tikzcd}
\]

\begin{example}
The Grothendieck group of the natural numbers $\mathbb{N}_0$ is the integers $\Z$
$$K(\mathbb{N}_0)=\Z.$$
\end{example}

There are several explicit constructions of the Grothendieck group (see for example \cite{MR3076731}).

\subsection{$\GW(R)$}
Let $R$ be a commutative ring. 
\begin{definition}
The \emph{Grothendieck-Witt ring $\GW(R)$} of $R$ is the group completion, i.e. the Grothendieck group, of the semi-ring of isometry classes of non-degenerate symmetric bilinear forms over $R$.
\end{definition}

\subsubsection{Over a field $k$}
Since over a field $k$ any symmetric bilinear form can be diagonalized, we can describe $\GW(k)$ in terms of explicit generators and relations.
Let $\langle a\rangle $ represent the 1-dimensional non-degenerate symmetric bilinear form $k\times k\rightarrow k$ defined by $(x,y)\mapsto axy$ for $a\in k^{\times}$ a unit in $k$. Then $\GW(k)$ is generated by $\langle a\rangle$ for $a\in k^{\times}$ subject to the following relations
\begin{enumerate}
\item $\langle a\rangle =\langle ab^2\rangle $ for $a,b\in k^{\times}$
\item $\langle a\rangle \langle b\rangle =\langle ab\rangle $ for $a,b\in k^{\times}$
\item $\langle a\rangle +\langle b\rangle =\langle a+b\rangle +\langle ab(a+b)\rangle $ for $a,b\in k^{\times}$ and $a+b\neq 0$
\item $\langle -a\rangle +\langle a\rangle =\langle -1\rangle +\langle 1\rangle $ for $a\in k^{\times}$.
\end{enumerate}
\begin{remark}
1.-3. imply 4. 

However, to simplify computations, we add the fourth relation and call $\langle 1\rangle +\langle -1\rangle $ the \emph{hyperbolic form}.
\end{remark}

\subsubsection{Examples}
\begin{example} 
For an algebraically closed field like the complex numbers $\C$, it follows from the first relation that any element of the Grothendieck-Witt ring is equal to the sum of $\langle 1\rangle's$. Hence, the rank establishes an isomorphism $\GW(\C)\cong \Z$.
\end{example}
\begin{example} $\GW(\R)\cong \Z\times \Z$
\end{example}
\begin{proof}
Let $V$ be an $n$-dimensional $\R$-vector space and $b:V\times V\rightarrow \R$ a non-degenerate symmetric bilinear form.
By Silvester's theorem
 there is a basis $\{v_1,\dots,v_n\}$ of $V$ such that the Gram Matrix $(b(v_i,v_j))_{i,j}$ is of the form
\[
\begin{pmatrix}
1 &  &&&&  \\
 & \dots &&&&\\
 & &1&&&\\
 & & & -1&&\\
 & & & & \dots&\\
 & & & & & -1
\end{pmatrix}.
\]
Let the \emph{signature} $\operatorname{sgn}(b)$ of $b$ be equal to number of $1$'s minus the number of $-1$'s.
Then $\GW(\R)\cong \{(r,s)\in \Z\times\Z: r+s\equiv 0 \operatorname{mod} 2\}\cong \Z\times \Z$ where $r$ is the rank and $s$ the signature of the bilinear form.
\end{proof}
\begin{example}
$\GW(\mathbb{F}_q)\cong \Z\times \mathbb{F}_q^{\times}/(\mathbb{F}_q^{\times})^2$ where the isomorphism is given by the rank and discriminant (= determinant of the Gram matrix).
\end{example}

\begin{example}
Let $k$ be a field. Then $\GW(k[t])\cong \GW(k)$ by Harder's Theorem (see \cite[Theorem 3.13, Chapter VII]{MR2235330} for $\operatorname{char} k\neq 2$ and \cite[Lemma 30]{MR3909901} for $\operatorname{char} k=2$) 
\end{example}

\begin{example}
Let again $k$ be a field, and for simplicity assume that the characteristic of $k$ is not $2$. Then by Springer's Theorem \cite[Theorem 1.4, Chapter VI]{MR2104929}
\[\frac{\GW(k)\oplus\GW(k)}{\Z(\langle1\rangle+\langle-1\rangle,-(\langle1\rangle+\langle-1\rangle))}\xrightarrow{\cong}\GW(k((t))),\text{ }(\langle u\rangle,\langle v\rangle)\mapsto\langle u\rangle+\langle tv\rangle\]
is an isomorphism.
\end{example}

\begin{example}
As in the previous example, let $k$ be a field of characteristic not $2$. Then the extension $k \subset k[[t]]$ defines an isomorphism $\GW(k[[t]])\cong \GW(k)$. In more detail, $\GW(k[[t]])$ is the kernel of the second residue homomorphism $\GW(k((t)) ) \to \GW(k)$ associated to the ideal $(t)$ \cite[Theorem C]{MR1670591}.
\end{example}

\begin{example}
Let $k$ be a field of characteristic not $2$. The kernel $I$ of the rank map $\operatorname{rk}:\GW(k)\rightarrow \Z$ is called the \emph{fundamental ideal}.
The Milnor conjectures \cite{Milnor_AlgK-theory_quadratic_forms} state that 
\[I^n/I^{n+1}\cong K^M_n(k)\otimes \Z/2\cong H^n_{\text{\'{e}t}}(k;\Z/2)\]
and was proven by Orlov--Vishik--Voevodsky \cite{MR2276765} and Voevodsky \cite{MR2031199}, \cite{MR2031198}, respectively. One can interpret such isomorphisms as giving invariants of bilinear forms (in $I^n$) valued in Milnor K-theory or \'etale cohomology. The first of these invariants are the rank, discriminant, Hasse-Witt and Arason invariants. For fields of finite \'etale cohomological dimension, this gives a finite list of invariants capable of showing two sums/differences of generators are the same or distinguishing between them. 
\end{example}

\subsubsection{A transfer map}
Let $k\subset L$ a separable field extension. The \emph{transfer} of a non-degenerate symmetric bilinear form $b:V\times V\rightarrow L$ is the form over $k$
\[V\times V\xrightarrow{b}L\xrightarrow{\Tr_{L/k}}k\]
where $\Tr_{L/k}$ denotes the field trace, equal to the sum of the Galois conjugates.
This yields a homomorphism
\[\operatorname{Tr}_{L/k}:\GW(L)\rightarrow \GW(k).\]

For example, $\Tr_{L/k} \langle 1 \rangle$ is the usual class of the trace form of the field extension from number theory.

\section{$\A^1$-homotopy theory and degree}
\label{section: A1 homotopy theory}
Instead of remembering only the sign of $Jf(q)$ in \eqref{equation: def jacobian}, it is an idea of Lannes and Morel to remember the class $\langle Jf(q)\rangle$ in $\GW(k)$, that is $Jf(q)$ up to squares, and get a count in the Grothendieck-Witt ring $\GW(k)$ instead of the integers $\Z$. 
\subsection{The degree of an endomorphism of $\PP^1$}
\label{subsection:degree_endo_P1}
As a first case, consider endomorphisms of the projective line $\PP^1$.
Let $f:\PP^1_k\rightarrow\PP^1_k$, $p\in \PP^1(k)$ and $f^{-1}(p)=\{q_1,\dots,q_m\}$. Suppose $Jf(q_i)=f'(q_i)\neq0$ for all $i=1,\dots,m$ and define 
\[\deg f:= \sum_{i=1}^m\langle Jf(q_i)\rangle \in \GW(k).\]
This does not depend on $p$.

\begin{exercise}
\begin{enumerate}
\item $\deg^{\A^1} (\PP^1_k\rightarrow \PP^1_k, z\mapsto az)=\langle a\rangle \in\GW(k)$
\item $\deg^{\A^1} (\PP^1_k\rightarrow \PP^1_k, z\mapsto z^2)=\langle 1\rangle +\langle -1\rangle \in\GW(k)$
\end{enumerate}
\end{exercise}

\begin{remark}
Naively one can define an $\A^1$-homotopy between two morphisms $f:X\rightarrow Y$ and $g:X\rightarrow Y$ as a morphism $X\times\A^1\rightarrow Y$ which equals to $f$ (respectively $g$) when restricted to $0\in \A^1$ (respectively $1\in \A^1$). Functions are said to be in the same \emph{naive pointed homotopy class} if they are equivalent under the equivalence relation generated by naive $\A^1$-homotopy.

In \cite{MR3059240} C. Cazanave finds a monoid structure on the set of naive pointed homotopy classes of morphisms $\PP^1\rightarrow \PP^1$ (where $\PP^1$ is pointed at infinity) and shows that the group completion of this monoid equals the $\A^1$-homotopy classes of pointed maps from $\PP^1$ to $\PP^1$ in the sense of Morel-Voevodsky which we define in the next subsection \ref{subsection: the homotopy category}. He furthermore provides an explicit formula for the degree of $f: \PP^1 \to \PP^1$: the degree $\deg f$ is given by a certain B\'ezout form \cite[Definition 3.4]{MR3059240}, yielding another explicit and computationally tractable method to compute $\deg f.$
\end{remark}

We can do this in higher dimensions as well. Just as in classical topology, $\PP^n/\PP^{n-1}$ is a `sphere' in $\A^1$-homotopy theory. Morel's $\A^1$-degree homomorphism
\begin{equation}
\label{eq: Morel degree}
\deg^{\A^1}:[\PP^n/\PP^{n-1},\PP^n/\PP^{n-1}]_{\A^1}\rightarrow\GW(k)
\end{equation} assigns an element of $\GW(k)$ to each $\A^1$-homotopy class of a morphisms $\PP^n/\PP^{n-1}\rightarrow\PP^n/\PP^{n-1}$ \cite{MR2934577}. In order to understand this degree \eqref{eq: Morel degree}, we first have to make sense of $\PP^n/\PP^{n-1}$. Morel and Voevodsky's $\A^1$-homotopy theory allows this and much more.

\subsection{The homotopy category $ho(Spc_k)$}
\label{subsection: the homotopy category}
We give a brief sketch of $\A^1$-homotopy theory \cite{MR1813224} here. Further exposition can be found in  \cite{MR3727503} \cite {MR2465990} \cite{2019arXiv190208857W}, for example. 

$\PP^n/\PP^{n-1}$ should be the colimit of the diagram 
\[
\begin{tikzcd}
\PP^{n-1}\arrow{r}\arrow{d}&\PP^n\\
*.
\end{tikzcd}
\]
However, the category of (smooth) schemes over $k$ in not closed under taking colimits and we need to enlarge it.

Let $Sm_k$ be the category of smooth (separated of finite type) schemes over a field $k$. We embed $Sm_k$ fully faithfully into the category of simplicial presheaves $sPre(Sm_k)$, i.e., functors $Sm_k^{\operatorname{op}}\rightarrow sSet$, via the Yoneda embedding
\[Sm_k\rightarrow sPre(Sm_k), X\mapsto \operatorname{Hom}_{Sm_k}(-,X).\]
The category $sPre(Sm_k)$ has finite limits and colimits and the quotient $\PP^n/\PP^{n-1}$ is an object in this category. Note that the category $sSet$ of simplicial sets also embedds into $sPre(Sm_k)$ via the constant embedding
\[sSet\rightarrow sPre(Sm_k), T\mapsto ((-)\mapsto T).\] 

The category $sPre(Sm_k)$ can be given the structure of a simplicial model category \cite{MR1944041} or can be viewed as an $\infty$-category \cite{MR2522659}. Here, we will think of both as structures which encode homotopy theories, and blur the (important and interesting) differences between them. In both viewpoints, there is a notion of weak equivalence and there is a well-defined homotopy category, which is the category where all the weak equivalences are inverted. In either setting, one can use \emph{Bousfield localization} (see \cite{MR1944041}) to impose additional weak equivalences or equivalently invert more morphisms in the homotopy category. 

In a certain technical sense, $sPre(Sm_k)$ is obtained by freely adding colimits. However, colimits corresponding to gluing ``open covers" already existed in $Sm_k$. We wanted these, but destroyed them in passing to $sPre(Sm_k)$. To rectify the situation, one uses Bousfield localization to impose the condition that a map from an open cover of $X$ to $X$ is a weak equivalence.

The ``open covers" we mean in this context are those associated to the \emph{Nisnevich} \emph{Grothendieck topology}. (See e.g. \cite{MR2334212} for more information on Grothendieck topologies). The Nisnevich topology is finer than the Zariski topology but coarser than the \'{e}tale topology and carries useful properties of both of them. 
It is the Grothendieck topology on $Sm_k$ generated by \emph{elementary distinguished squares}, that is Cartesian squares in $Sm_k$
\[\begin{tikzcd}V\arrow[r, ]\arrow[d]& Y \arrow[d, "p"] \\U\arrow[r,"i"]& X\end{tikzcd}\]
such that $i$ is an open immersion, $p$ is \'{e}tale and $p^{-1}(X\setminus U)_{\operatorname{red}}\rightarrow (X\setminus U)_{\operatorname{red}}$ is an isomorphism. Associated to an open cover of a smooth scheme $X$, we have a simplicial presheaf corresponding to its \v{C}ech nerve. Let $L_{\operatorname{Nis}}$ denote the Bousfield localization requiring all such maps to be weak equivalences. $L_{Nis}$ can be thought of as a functor $$L_{\operatorname{Nis}}:sPre(Sm_k)\rightarrow Sh_k$$ whose target $Sh_k$ is a homotopy theory of \emph{sheaves}.

In $\A^1$-homotopy theory, one wants $\A^1$ to play the role of the unit inverval $[0,1]$ in classical topology. So we force $\A^1$ to be contractible, meaning it is weakly equivalent to the point. In order for the product structure to have desirable properties, we moreover force $X\times \A^1\rightarrow X$ to be a weak equivalence for all smooth schemes $X$, and let  $L_{\A^1}:Sh_k\rightarrow Spc_k$ denote the resulting Bousfield localization. We call the resulting homotopy theory $Spc_k$ \emph{spaces} over $k$. The total process can be summarized:

\[Sm_k\rightarrow sPre(Sm_k)\xrightarrow{L_{\operatorname{Nis}}} Sh_k\xrightarrow{L_{\A^1}} Spc_k\]

\noindent Let $[-,-]_{\A^1}$ denote the maps in the homotopy category $ho(Spc_k)$ of $Spc_k$. 

Having sketched $\A^1$-homotopy theory, the codomain of Morel's degree map has been defined, and we state:
\begin{theorem}(Morel)
The degree map $\deg^{\A^1}:[\PP^n/\PP^{n-1},\PP^n/\PP^{n-1}]_{\A^1}\rightarrow\GW(k)$
is an isomorphism for $n\ge 2$ \cite{MR2934577}.
\end{theorem}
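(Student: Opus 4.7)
The plan is to reduce the theorem to Morel's computation of unstable $\A^1$-homotopy sheaves of motivic spheres. The first step is to identify $\PP^n/\PP^{n-1}$ in $ho(Spc_k)$ with $(\PP^1)^{\wedge n}$. Motivic purity (or the elementary distinguished square with vertices $(\A^n \setminus 0,\, \A^n,\, \PP^n \setminus \{0\},\, \PP^n)$ combined with the $\A^1$-equivalence $\PP^{n-1} \simeq \PP^n \setminus \{0\}$) yields $\PP^n/\PP^{n-1} \simeq \A^n/(\A^n \setminus 0)$. A further application of purity at the origin $0 \in \A^n$ identifies this with the Thom space of the trivial rank-$n$ bundle over $\Spec k$, hence with $(\A^1/\mathbb{G}_m)^{\wedge n}$; finally the two-chart distinguished square for $\PP^1$ gives $\A^1/\mathbb{G}_m \simeq \PP^1$.

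With this identification in place, $[\PP^n/\PP^{n-1}, \PP^n/\PP^{n-1}]_{\A^1}$ becomes a self-map set of motivic spheres, which I would evaluate using Morel's main theorem from \cite{MR2934577}: for $n \geq 2$, the unstable $\A^1$-homotopy sheaf $\boldsymbol{\pi}_n^{\A^1}((\PP^1)^{\wedge n})$ is the unramified Milnor--Witt sheaf $\mathbf{K}^{MW}_n$, whose sections over $\Spec k$, after accounting for the $\mathbb{G}_m^{\wedge n}$-twist, give $K^{MW}_0(k)$. Morel's presentation of $K^{MW}_0(k)$ by generators $\langle a \rangle$ for $a \in k^{\times}$ satisfies precisely the relations of $\GW(k)$ recalled in Section \ref{section: GW(k)}, giving the final isomorphism. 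The map itself is constructed explicitly by sending the self-map $z \mapsto az$ of $\PP^1$, smashed with $n-1$ identities, to $\langle a \rangle \in \GW(k)$, and extending by the abelian group structure on the endomorphism set (available for $n \geq 2$ since $(\PP^1)^{\wedge n}$ is then $\A^1$-simply-connected).

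The main obstacle is Morel's computation itself, which is substantial: it rests on (i) the $\A^1$-connectivity theorem, ensuring $(\PP^1)^{\wedge n}$ is $\A^1$-$(n-1)$-connected; (ii) strict $\A^1$-invariance and the contraction structure of the homotopy sheaves $\boldsymbol{\pi}_m^{\A^1}$ for $m \geq 2$, which reduces the problem to a presentation over $\Spec k$; and (iii) a separate construction of Milnor--Witt K-theory with a matching universal property, where the $\GW$-structure emerges from explicit relations among the pointed self-maps $z \mapsto az$ of $\PP^1$ and the Hopf map $\eta$. The hypothesis $n \geq 2$ is essential: for $n = 1$, as the remark on Cazanave's work indicates, $[\PP^1, \PP^1]_{\A^1}$ is only a commutative monoid, and one must pass to its group completion to recover $\GW(k)$.
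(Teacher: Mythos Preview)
The paper does not prove this theorem; it is stated with attribution and a citation to Morel \cite{MR2934577}, and the surrounding text treats it as input from the literature. So there is no in-paper argument to compare your proposal against.

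That said, your outline is an accurate high-level sketch of Morel's own argument in \cite{MR2934577}: the identification $\PP^n/\PP^{n-1}\simeq (\PP^1)^{\wedge n}$ via purity, the computation of the relevant unstable $\A^1$-homotopy sheaf as $\mathbf{K}^{MW}_n$, and the identification $K^{MW}_0(k)\cong\GW(k)$ are exactly the ingredients Morel assembles. You are also right that the deep content sits entirely in Morel's machinery (connectivity, strict $\A^1$-invariance, the construction and universal property of Milnor--Witt $K$-theory), none of which you could reproduce here. One small correction on the $n=1$ remark: the issue is not merely that $[\PP^1,\PP^1]_{\A^1}$ fails to be a group; it \emph{is} a group, but it carries extra information beyond $\GW(k)$ coming from $\pi_1^{\A^1}(\PP^1)$, so the degree map is a surjection rather than an isomorphism in that case. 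Cazanave's result concerns the \emph{naive} homotopy classes, which form a monoid whose group completion recovers $[\PP^1,\PP^1]_{\A^1}$.
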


Moreover, Morel's degree extends the topological degree in the sense that the following diagram is commutative: $$\xymatrix{ [S^n,S^n] \ar[d]^{\deg} &&& \ar[lll]^{ \R\textrm{-points}}[\PP^n_k/\PP_k^{n-1}, \PP^n_k/\PP_k^{n-1}]_{\A^1} \ar[d]^{\deg} \ar[rrr]^{ \C\textrm{-points}}&&& [S^{2n},S^{2n}] \ar[d]^{\deg} \\
\Z &&& \ar[lll]^{\operatorname{signature}} \GW(k) \ar[rrr]_{\operatorname{rank}} &&& \Z} $$ for any subfield $k$ of $\R$.

\subsection{Purity}
\label{subsection: purity}
Let $V\rightarrow X$ be a vector bundle and $i:X\hookrightarrow V$ the zero section.
The \emph{Thom space} of $V$ is defined as follows
\[Th(V):=V/(V\setminus i(X)).\]
In the $\A^1$-homotopy category $ho(Spc_k)$ the Thom space $Th(V)$ is isomorphic to $\PP(V\oplus \glob)/\PP(V)$ where $\glob\rightarrow X$ is the trivial rank 1 bundle \cite[Proposition III.2.17]{MR1813224}.
\begin{theorem}[Homotopy purity]
\label{thm: purity}
Let $Z\hookrightarrow X$ be a closed immersion in $Sm_k$ and $N_ZX\rightarrow Z$ its normal bundle. Then
\[X/(X\setminus Z)\cong Th(N_ZX)\]
is a canonical isomorphism in $ho(Spc_k)$ \cite[Theorem III.2.23]{MR1813224}.
\end{theorem}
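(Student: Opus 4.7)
The plan is to adapt the classical ``deformation to the normal bundle'' argument. First I would construct, for the closed immersion $i:Z\hookrightarrow X$, a smooth scheme $D(X,Z)$ equipped with a flat map to $\A^1$ whose fiber over $1$ is $X$ and whose fiber over $0$ is the total space of the normal bundle $N_ZX$. Explicitly, take $D(X,Z)$ to be the blowup $\mathrm{Bl}_{Z\times\{0\}}(X\times\A^1)$ with the strict transform of $X\times\{0\}$ removed; smoothness of $Z$ and $X$ ensures this is smooth and that the special fiber is genuinely $N_ZX$ (rather than merely the normal cone). There is a canonical closed immersion $Z\times\A^1\hookrightarrow D(X,Z)$ whose restriction to $t=1$ recovers $i$ and whose restriction to $t=0$ is the zero section $Z\hookrightarrow N_ZX$.

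Next I would form the zig-zag of pointed spaces
\[
X/(X\setminus Z)\;\longrightarrow\;D(X,Z)/(D(X,Z)\setminus Z\times\A^1)\;\longleftarrow\;Th(N_ZX)
\]
induced by the inclusions of the fibers at $1$ and $0$. To conclude the theorem it suffices to show that both maps are $\A^1$-weak equivalences. Since the functor $Y\mapsto Y/(Y\setminus W)$ plays well with Nisnevich-local data on a neighborhood of $W$, I would check this Nisnevich-locally on $Z$, using that open covers and elementary distinguished squares already invert to isomorphisms in $Spc_k$.

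For the local check, I would use the étale-slice lemma for smooth closed immersions: around any point of $Z$ there is a Nisnevich neighborhood $U\to X$ together with an étale map $U\to \A^d_Z=N_ZU\vert_Z$ which is the identity on $Z$, where $d=\mathrm{codim}_X Z$. Since the formation of $D(-,Z)$ is functorial for such étale maps that are isomorphisms over $Z$, both the fiber at $t=1$ and the fiber at $t=0$ may be identified with $(\A^d_Z, Z\times\{0\})$, and the resulting family is (up to Nisnevich-equivalence) just $\A^d_Z\times\A^1$. The projection $\A^d_Z\times\A^1\to \A^d_Z$ is an $\A^1$-weak equivalence, so both inclusions of fibers induce isomorphisms on the cofiber; this gives the desired equivalence locally.

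The main obstacle will be the Nisnevich-local to global passage: one must verify that the cofiber construction $Y\mapsto Y/(Y\setminus W)$ satisfies Nisnevich descent in the appropriate pointed sense, so that isomorphisms on an elementary distinguished cover of $Z$ inside $X$ patch to the global isomorphism. This amounts to checking that the Nisnevich-local equivalences constructed above glue to an equivalence in $ho(Spc_k)$, which is where the Bousfield localization $L_{\mathrm{Nis}}$ of Section~\ref{subsection: the homotopy category} is indispensable. A secondary technical point is the étale-slice lemma itself, which requires care at non-rational residue fields; working Nisnevich-locally (rather than Zariski-locally) is precisely what makes the slicing available.
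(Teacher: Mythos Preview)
The paper does not prove this theorem; it merely states it with a citation to Morel--Voevodsky \cite[Theorem III.2.23]{MR1813224}, so there is no in-paper proof to compare your proposal against. Your sketch is in fact a faithful outline of the original Morel--Voevodsky argument: deformation to the normal bundle gives the zig-zag, and the two maps are shown to be $\A^1$-weak equivalences by reducing Nisnevich-locally to the case of a linear embedding $Z\hookrightarrow \A^d_Z$, where the family trivializes. Your identification of the two delicate points---the Nisnevich-local \'etale slicing at possibly non-rational points, and the descent/gluing step for the cofiber construction---matches exactly where the work lies in the cited reference.
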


\section{The local $\A^1$-degree}
\label{subsection: local A1-degree}

In Section \ref{subsection: Brouwer degree as sum of local degrees}, we discussed the local topological Brouwer degree. There is an analogous local $\A^1$-degree. We came across it already in Section \ref{subsection:degree_endo_P1} to give the degree of an endomorphism of $\PP^1$, without introducing it in its own right. We do this now.

Suppose $f$ is a morphism $f:\A^n\rightarrow \A^n$ and $x$ in $\A^n(k)$ is such that $x$ is isolated in $f^{-1}(f(x))$, i.e., there is a Zariski open set $U\subset \A^n$ with $x\in U$ such that  $f^{-1}(f(x))\cap U=\{x\}$. Then by the homotopy purity theorem \ref{thm: purity} it follows that $U/(U\setminus \{x\})$ is canonically isomorphic to the Thom space $Th(N_x\A^n)$ which is canonically isomorphic to $\PP(N_x\A^n\oplus \glob)/\PP(N_x\A^n)$ in the $\A^1$-homotopy category $ho(Spc_k)$. The choice of basis for $N_x\A^n$ determines an isomorphism $\PP(N_x\A^n\oplus \glob)/\PP(N_x\A^n)\simeq\PP^n_k/\PP^{n-1}_k$ in $ho(Spc_k)$, and the canonical trivialization of the tangent bundle of affine space $\A^n$ gives a preferred choice.

The \emph{local $\A^1$-degree} $\deg^{\A^1}_xf$ of $f$ at $x$ is defined to be the degree of 
\[\PP_k^n/\PP_k^{n-1}\cong Th(N_x\A^n)\cong U/(U\setminus \{x\})\xrightarrow{\bar{f}} \A^n/(\A^n\setminus \{f(x)\})\cong \PP_k^n/\PP_k^{n-1}. \]

As before let $Jf:=\det \frac{\partial f_i}{\partial x_j}$ be the \emph{jacobian element}.
\begin{example}
Let $x\in \A^n$ be a zero of $f$. If $x$ is $k$-rational and $Jf(x)\neq 0$ in $k$, then $\deg^{\A^1}_xf=\langle Jf(x)\rangle \in \GW(k)$ \cite{MR3909901}.
\end{example}
\begin{example}
\label{example: local A1 degree separable field extension}
Let $x\in \A^n$ be a zero of $f$. 
Assume $x$ is defined over a separable field extension $k(x)/k$ and $Jf(x)\neq 0$ in $k(x)$, then there is an extension of the definition of local degree and it can be computed to be $\deg^{\A^1}_xf=\operatorname{Tr}_{k(x)/k}\langle Jf(x)\rangle \in \GW(k)$ \cite[Proposition 15]{MR3909901}.
\end{example}

\subsection{The Eisenbud-Levine/Khimshiashvili signature formula}
\label{section: EKL-form}
When $x\in \A^n_k$ is a non-simple isolated zero of $f:\A^n_k\rightarrow \A^n_k$, i.e., $Jf(x)=0$, we can compute $\deg^{\A^1}_xf$ as the \emph{Eisenbud-Levine/Khimshiashvili form}, short \emph{EKL-form}. This form is named after the \emph{Eisenbud-Levine/Khimshiashvili signature formula}: For $k=\R$ Eisenbud-Levine and Khimshiashvili, independently, defined a non-degenerate symmetric bilinear form, the EKL-form over $\R$ whose signature is equal to the local topological Brouwer degree \cite{eisenbud77} \cite{khimshiashvili}. This form is defined on the vector space $\frac{\R[x_1,\dots,x_n]_x}{(f_1,\ldots, f_n)}$. For $k = \C$, the dimension of this vector space was shown to be the local topological Brouwer degree $\deg_x f$ by Palamodov in \cite[Corollary 4]{MR0236424}.

The EKL-form is defined in purely algebraic terms, and can thus be defined over any field $k$. Eisenbud raised the question if there was an interpretation of the EKL-form over an arbitrary field \cite[p. 163-4 some remaining questions (3)]{MR494226}. The answer is yes: In \cite{MR3909901} Kass and the second named author show that the class of the EKL-form in $\GW(k)$ is equal to the local $\A^1$-degree when $k=k(x)$ and Brazelton, Burklund, McKean, Montoro and Opie extend this result to separable field extensions $k(x)/k$ \cite{brazelton2019trace}.
\begin{theorem}
We have 
\[\deg^{\A^1}_xf=\omega^{EKL}\]
in $\GW(k)$.
\end{theorem}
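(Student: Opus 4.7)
The plan is to prove the identity by reducing to the case of simple isolated zeros, where both sides can be computed explicitly, and then using a conservation argument under deformation to handle the general (non-simple) case. I proceed in the following steps.

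First, I would reduce to the case that $x$ is $k$-rational. By Example \ref{example: local A1 degree separable field extension}, when $x$ is defined over a separable extension $k(x)/k$, the local $\A^1$-degree satisfies $\deg^{\A^1}_x f = \Tr_{k(x)/k} \deg^{\A^1}_x f_{k(x)}$. A parallel statement holds for the EKL-form: the finite $k$-algebra $Q = \calO_{\A^n,x}/(f_1,\ldots,f_n)$ decomposes as a product over the geometric points, and its EKL-form is the transfer of the EKL-form of the local factor at $x$, computed over $k(x)$. Hence we may assume $x = 0 \in \A^n(k)$ and $f(0)=0$.

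Second, I would check the theorem in the simple case $Jf(0) \neq 0$. Here the algebra $Q$ is isomorphic to $k$, any choice of socle-detecting functional $\phi:Q\to k$ with $\phi(\Tr_Q \operatorname{grad})=1$ forces $\phi = (Jf(0))^{-1}$, and the resulting EKL-form is the one-dimensional form $\langle Jf(0)^{-1}\rangle = \langle Jf(0)\rangle \in \GW(k)$. This matches the computation of the local $\A^1$-degree recalled in the preceding example.

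Third, I would handle the general non-simple case via a deformation. Choose a one-parameter family $f_t : \A^n \to \A^n$ of morphisms with $f_0 = f$ such that, for $t$ in a punctured neighborhood of $0$, every zero of $f_t$ inside a fixed small Zariski neighborhood $U$ of $x$ is simple and defined over a separable extension of $k$. Using the purity isomorphism of Theorem \ref{thm: purity} and the $\A^1$-invariance of $[-,-]_{\A^1}$, the local $\A^1$-degree is conserved under such a deformation, yielding
\[\deg^{\A^1}_x f \;=\; \sum_{\tilde{x} \in f_t^{-1}(0) \cap U} \deg^{\A^1}_{\tilde{x}} f_t.\]
The EKL-form satisfies the corresponding algebraic additivity: the Scheja--Storch construction applied to the flat family of Gorenstein algebras $\calO_{\A^n\times\A^1,(x,0)}/(f_t)$ over $k[t]_{(t)}$ produces a bilinear form over $k[t]_{(t)}$ whose fiber at $t=0$ is $\omega^{EKL}$ and whose generic fiber splits as the orthogonal sum of the one-dimensional forms $\langle Jf_t(\tilde{x})\rangle$. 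Combining these two additivity statements with Step 2 applied to each simple zero gives the theorem.

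The main obstacle is establishing the two conservation statements in parallel and matching their generic fibers. On the $\A^1$-side, one must package the purity identification locally around $U$ so that the total map to $\PP^n/\PP^{n-1}$ is $\A^1$-homotopy equivalent to the sum of its local contributions along $f_t^{-1}(0)\cap U$; this requires a compactly-supported/excision argument compatible with the localization used to define $\deg^{\A^1}_x$. On the EKL-side, one must invoke the fact (Harder's theorem, used as in Example $\GW(k[t])\cong\GW(k)$ in the paper) that bilinear forms over $k[t]$ are determined by a single fiber, and then identify both fibers of the Scheja--Storch form in terms of the one-dimensional contributions at the simple zeros. The delicate point is checking that the $k[t]$-bilinear form produced by Scheja--Storch really does specialize to $\langle Jf_t(\tilde{x})\rangle$ at each simple point and to $\omega^{EKL}$ at $t=0$; once this is verified, the theorem reduces to the already-handled simple case together with the transfer discussion of Step 1.
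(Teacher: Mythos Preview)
The paper does not give its own proof of this theorem; it is stated with attribution to \cite{MR3909901} (for the case $k(x)=k$) and to \cite{brazelton2019trace} (for the extension to separable residue fields). Your Steps 2 and 3 are essentially the strategy of \cite{MR3909901}: deform to simple zeros, run Scheja--Storch plus Harder's theorem on the EKL side, run an $\A^1$-homotopy/excision argument on the degree side, and match at the generic fiber where both sides are sums of $\Tr_{k(\tilde{x})/k}\langle Jf_t(\tilde{x})\rangle$.

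There is a genuine gap in Step 1. You invoke Example \ref{example: local A1 degree separable field extension} to reduce to rational $x$, but that example only establishes $\deg^{\A^1}_x f = \Tr_{k(x)/k}\langle Jf(x)\rangle$ when $Jf(x)\neq 0$, i.e.\ for \emph{simple} zeros. The transfer identity $\deg^{\A^1}_x f = \Tr_{k(x)/k}\bigl(\deg^{\A^1}_x f_{k(x)}\bigr)$ at a \emph{non-simple} zero over a separable extension is exactly the content of \cite{brazelton2019trace} and is not a formality: it amounts to comparing the Scheja--Storch form over $k$ with the transfer of the Scheja--Storch form over $k(x)$ and then invoking the rational-point case of the very theorem you are proving. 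As written, Step 1 is therefore circular. The clean fix is to drop Step 1 entirely and prove the statement for rational $x$ via Steps 2--3 (this is \cite{MR3909901}); the separable-extension case then requires the additional input of \cite{brazelton2019trace}, not Example \ref{example: local A1 degree separable field extension}.

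A smaller point: in Step 3 you assert the existence of a one-parameter deformation $f_t$ with only simple, separably-defined zeros near $x$, but this needs justification. In \cite{MR3909901} it is arranged via a specific linear perturbation together with a genericity argument, and some care is needed over finite fields.
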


We recall the definition of the EKL-form from \cite{MR3909901}.
When $x\in \A^n_k$ is an isolated zero of $f:\A^n_k\rightarrow \A^n_k$, the local algebra $\glob_{f^{-1}(0),x}$ is a finite dimensional $k$-vector space. 
\begin{definition}
Assume $\operatorname{char} k$ does not divide the rank of $\glob_{f^{-1}(0),x}$.
Then the \emph{EKL-form} is given by
\[\omega^{EKL}:\glob_{f^{-1}(0),x}\times \glob_{f^{-1}(0),x}\rightarrow k, (a,b)\mapsto \eta(ab)\]
where $\eta:\glob_{f^{-1}(0),x}\rightarrow k$ is any $k$-linear map with $\eta(Jf)=\dim_k\glob_{f^{-1}(0),x}$ where $Jf=\frac{\partial f_i}{\partial x_j}$ is the jacobian element.
\end{definition}
The EKL-form is well-defined, i.e., it does not depend on the choice of $\eta$ and is non-degenerate \cite[Lemma 6]{MR3909901}.
\begin{remark}
The EKL-form can also be defined when $\operatorname{char} k$ divides the rank of $\glob_{f^{-1}(0),x}$ in terms of the `distinguished socle element' $E$ \cite[$\S1$]{MR3909901}.

To define $E$, one needs `Nisnevich coordinates' which always exist over a field \cite[$\S1$]{MR3909901} and \cite[Definition 17]{2017arXiv170801175K}.
\end{remark}

\begin{example}\label{EKLformz2}
Let $f:\R\rightarrow \R$ be defined by $f(z)=z^2$. Then $Jf=2z$ and $(1, 2z)$ is a basis for $\glob_{f^{-1}(0),0}=\frac{\mathbb{R}[z]_{(z)}}{(z^2)}$. Choose $\eta$ such that $\eta(1)=0$ and $\eta(2z)=2$. Then $\omega^{EKL}$ is the rank two form defined by the matrix 
\[\begin{bmatrix}
0 & 2\\
2 & 0
\end{bmatrix}\]
that is the hyperbolic form $\langle1\rangle+\langle-1\rangle\in \GW(\mathbb{R})$. The signature of $\omega^{EKL}(f)$ is 0 which agrees with the intuition: For $0\neq a\in \R$, the preimage $f^{-1}(a)$ is either empty (when $a<0$) or consists of 2 points (when $a>0$). Locally around one of these points, $f$ is orientation preserving, and $f$ is orientation reversing around the other point, contributing a +1 and -1, respectively, to the degree of $f$.
\end{example}

\section{$\A^1$-Milnor numbers}
\label{section:A1Milnornumbers}

\subsection{Milnor numbers over $\C$}
\label{subsection: C Milnor numbers}

The Milnor number is an integer multiplicity associated to an isolated critical point of a polynomial (or more generally a holomorphic) map $f:\C^n \to \C$.\footnote{ A {\em critical point} of $f$ is a point where the partials $\partial_i f$ vanish and a critical point is said to be {\em isolated} if there is an open neighborhood around that point not containing other critical points.} Such critical points $x$ correspond to isolated singularities of the complex hypersurfaces $\{ f = f(x)\}$.\footnote{A hypersurface of affine (respectively projective) space is the zero locus of a (respectively homogenous) polynomial, and a point $x$ on a scheme $X$ is said to be an {\em isolated singularity} if there is a Zariski open neighborhood $U$ of $x$ such that the only singular point of $U$ is $x$.} There are numerous definitions of the Milnor number, which of course are all equal, creating lovely pictures of what this number means. See for example \cite{Orlik-Milnornum}. We give two here, and then describe joint work of Jesse Kass and the second named author enriching the equality between them \cite[$\S6$]{MR3909901}.

When $X$ is the hypersurface $X = \{f=0\} \subset \C^n$, the singular locus is the closed subscheme determined by $f=0$ and $\grad f =0$. Suppose $x \in X$ is an isolated critical point of $f$. Since $\grad f$ has an isolated zero at $x$, we may take the local Brouwer degree $\deg_x \grad f$. The {\em Milnor number} $\mu_x (X)$ is this local (topological) degree $$\mu_x (X) = \deg_x \grad f.$$ 

Another point of view on the Milnor number is as follows. A point $x$ on a complex hypersurface $X$ is called a {\em node} if the completed local ring $\hat{\calO}_{X,x}$ is isomorphic to $$\frac{\C[[x_1,\ldots,x_n]]_x}{x_1^2+\ldots x_n^2 + \text{ higher order terms}} $$ Equivalently, the determinant of the Hessian does not vanish at nodes. Nodes are the simplest singularity, and generically, a singularity will bifurcate into nodes. Milnor shows that the number of these nodes is the Milnor number \cite[p.113]{MR0239612}.

\begin{example}\label{cusp_mu_classical}
The \emph{cusp} is defined by the equation $f=x_2^2-x_1^3=0$ in $\C^2$. It has one isolated singularity at $0$ with Milnor number equal to $$\deg_0((x_1,x_2) \mapsto (-3 x_1^2, 2 x_2)) =\deg_0(x_1 \mapsto -3x_1^2)\deg_0(x_2 \mapsto 2 x_2) = 2*1=2.$$ Consider instead the perturbation $$f_t = x_2^2 - x_1^3 - tx_1$$ and the one-parameter family of hypersurfaces \begin{equation}\label{cusp_family_A1u}f_t(x_1,x_2) = u\end{equation} over $\A^1_u = \Spec \C[u]$. The hypersurface \eqref{cusp_family_A1u} has a singularity if and only if the cubic equation $x_1^2 + t x_1 + u$ has a double root. This happens if and only if the discriminant $-4t^3 - 27u^2$ is $0$. When $t=0$, we see that we have one singular point, which is the cusp we started with. When we fix a particular $t$ with $t \neq 0$, then we have $2$ singular points, both of which are nodes. As $t$ moves away from $0$, the cusp bifurcates into these $2$ nodes, verifying Milnor's equality in this case. See the figure below.

\tikzset{
  pics/curvy/.style = {
    code = {
      \draw (0, 0) to[out=0, in=180] (.3, -2.5);
    }
  },
  pics/nodey/.style = {
    code = {
      \draw (0, 0) to[out=-70, in=45] (0, -1.25)
        to[out=-135, in=-90, looseness=1.5] (-.6, -1.25)
        to[out=  90, in=135, looseness=1.5] (0, -1.25) to[out=-45, in=70] (0, -2.5);
    }
  }
}

\tikzstyle{point} = [circle, fill=black, inner sep=0pt,
minimum size=1mm, anchor=center]

\begin{center}
  \begin{tikzpicture}[x=.75cm, y=.75cm]
    \node at (3, 6.5) {$x_2^2 = x_1^3 + u$};
    \pic at (1.3, 6) {curvy};
    \pic at (1.8, 6) {curvy};
    \pic at (4, 6) {curvy};
    \pic at (4.5, 6) {curvy};
    \draw[shift={(3,6)}]
    (0, 0) to[out=-70, in=0] (-.5, -1.25) to[out=0, in=70] (0, -2.5);
    \node[point] at (3, 1.5) {};
    \draw (0, 0) -- (1, 2.4) -- (5.5, 2.4) -- (4.5, 0) -- cycle;
    \draw[|->, thick] (6.5, 5) node[above] {$(x_1, x_2, u)$} -- (6.5, 1.5) node[below] {$u$};
  \end{tikzpicture}
  \begin{tikzpicture}[x=.75cm, y=.75cm]
    \node at (3, 6.5) {$x_2^2 = x_1^3 + tx_1 + u$};
    \pic at (.8, 6) {curvy};
    \pic at (1.3, 6) {curvy};
    \pic at (2.2, 6) {nodey};
    \pic at (2.8, 6) {curvy};
    \pic at (3.9, 6) {nodey};
    \pic at (4.5, 6) {curvy};
    \pic at (5, 6) {curvy};
    \node[point] at (2.2, 1.5) {};
    \node[point] at (3.9, 1.5) {};
    \draw (0, 0) -- (1, 2.4) -- (5.5, 2.4) -- (4.5, 0) -- cycle;
    \draw[|->, thick] (6.5, 5) node[above] {$(x_1, x_2, u)$} -- (6.5, 1.5) node[below] {$u$};
  \end{tikzpicture}
  \end{center}

\end{example}

\subsection{$\A^1$-Milnor numbers}
\label{subsection: A1 Milnor numbers}

In \cite[$\S6$]{MR3909901} Kass and the second named author define an enriched version of the \emph{Milnor number} of hypersurface singularities, and then use the EKL-form to compute it. See also \cite{pauli2020computing} for computations of $\A^1$-Milnor numbers using Macaulay2. The definition applies to isolated zeros $x$ of $\grad f$, where $f$ is the equation determining the hypersurface. When $X$ is the hypersurface $X = \{f=0\} \subset \PP_k^n$ over a field $k$, the singular locus is the intersection of $X$ and the closed subscheme determined by $Z= \{\grad f =0\} \subseteq \PP^n_k$, and the assumption that $x$ is an isolated zero allows us to take the local $\A^1$-degree $\deg_x^{\A^1}(\grad f)$. Furthermore, the local ring $$\calO_{Z,x} \cong k[x_0,\ldots, x_n]_{x}/(\partial_0 f, \ldots, \partial_n f)$$ is a finite dimensional $k$-algebra with a distinguished presentation, giving an EKL-form computing $\deg_x^{\A^1}(\grad f)$.
\begin{definition}
Let $\{f=0\}=X\subset\A^n$ be a hypersurface with an isolated singularity at a point $x$. We set $\mu_x^{\A^1}(f):=\deg_x^{\A^1}(\grad f)$. 

Over $\C$, the generic singularity has completed local ring $\C[[x_1,\ldots,x_n]]/(x_1^2+\ldots+x_n^2)$, and we called such singularities nodes. Over non-algebraically closed fields, nodes carry interesting arithmetic information. For example, over $\R$, there are three types of nodes in the plane: the split node, defined by $x_1^2 = x_2^2$, the non-split node, given by $x_1^2 = -x_2^2$, and a complex conjugate pair of nodes.

\tikzstyle{point} = [circle, fill=black, inner sep=0pt,
minimum size=1mm, anchor=center]
\begin{center}
  \begin{tikzpicture}
    \node[point] at (0, 0) {};
    \draw (-1, -1) -- (1, 1) (1, -1) -- (-1, 1);
    \node[below, align=center] at (0, -1) {split node\\$x_1^2 = x_2^2$};

    \begin{scope}[xshift=3.5cm]
      \node[point] at (0, 0) {};
      \node[below, align=center] at (0, -1) {non-split node\\$x_1^2 = -x_2^2$};
    \end{scope}

    \begin{scope}[xshift=7cm]
      \node[below, align=center] at (0, -1)
      {node over $\mathbf C$\\
        $x_2^2 = x_1^3 + ax_1 + t$\\
      $t = -\frac23 a \sqrt{-\frac a3}$};
    \end{scope}

  \end{tikzpicture}
  \end{center}

To study nodes, we assume $$\characteristic k \neq 2,$$ and define a {\em node} to be a point on a finite-type $k$-scheme $X$ such that for all the points $\tilde{x}$ of the base change $X_{\kbar}$ of $X$ to the algebraic closure of $k$, the completed local ring $\hat{\calO}_{X_{\kbar},\tilde{x}}$ is isomorphic to $$\kbar[[x_1,\ldots,x_n]]/(x_1^2+x_2^2+\ldots+x_n^2 + \text{ higher order terms} ) $$ See \cite[Expos\'e XV]{MR0354657} for more information. 

\begin{example}
The $\A^1$-Milnor number of a node records information about its field of definition and tangent directions.  

Consider first the node $x=(0,0)$ of the plane curve given by $f(x_1, x_2) = a_1 x_1^2 + a_2 x_2^2 = 0$. Then $\mu_{x}^{\A^1}(f) = \deg^{\A^1}_0  (2 a_1 x_1, 2 a_2 x_2) = \lra{a_1a_2}$. The element $a_1a_2$ in $k^*/(k^*)^2$ has a geometric interpretation: the field of definition of the two lines $x_1 = \sqrt{\frac{-a_2}{a_1}} x_2$ and $x_1 = -\sqrt{\frac{-a_2}{a_1}} x_2$ making up the tangent cone is $k(\sqrt{-a_1a_2})$. A node is called {\em split} if these two lines are defined over $k$ and {\em non-split}  otherwise. More generally, given a rational point $x$ which is a node of a plane curve $\{ f=0\} \subset \PP^2_k$, let $D$ in $k^*/(k^*)^2$ such that the lines of the tangent cone to $f$ at $p$ are defined over $k(\sqrt{D})$. Then $\mu_x^{\A^1}(f) = \lra{-D}$. 

The field of definition of any node is separable \cite[Expos\'e XV, Th\'eoreme 1.2.6]{MR0354657}, so given a node $x$ on a plane curve $\{ f=0\} \subset \PP^2_k$ we can reduce to the case of a rational node using Example \ref{example: local A1 degree separable field extension}. Namely, we have a tower of field extensions $k \subseteq k(x) \subseteq k(x)[\sqrt{D}]$ where $D$ in $k(x)^*/(k(x)^*)^2$ is chosen so that $ k(x)[\sqrt{D}]$ is the field of definition of the lines in the tangent cone. Then $$\mu_x^{\A^1}(f) = \Tr_{k(x)/k} \lra{-D}.$$ 
 
In higher dimensions, we have for $f(x_0,\ldots, x_n) = a_1 x_1^2 + a_2 x_2^2 + \ldots + a_n x_n^2 + \text{ higher order terms}$ and $x=[1,0, \ldots, 0]$ that the $\A^1$-Milnor number is given by $$\mu_x^{\A^1}(f) = \langle 2^n\prod_{i=1}^n a_i \rangle,$$ and this gives the general case as we may similarly assume the node is at a rational point using $\mu_x^{\A^1}(f) = \Tr_{k(x)/k} \mu_x^{\A^1}(f \otimes k(x)).$
\end{example}

\begin{definition}\label{df:type}
For a node $x$ on a hypersurface $\{ f = 0 \}$ in affine or projective space, the {\em type} of $x$ is defined to be  $$\type(x) :=  \mu_x^{\A^1}(f \otimes k(x)).$$ 
\end{definition}

We also write $\type(x,f) = \type(x)$ to emphasize the dependence on $f$ when the dimension $n$ of the ambient affine or projective space is odd. When $n$ is even, $\type(x)$ is an invariant of the singularity, meaning it only depends on the completed local ring of $X = \{f=0\}$ and $x$, and notably not on the choice of $f$ itself \cite[Lemma 39]{MR3909901}.  When $n$ is odd, the type of $f$ will scale by $\langle a \rangle$ when $f$ is replaced by $af$.

Note that for a plane curve $\{ f = 0 \}$, the type of a node records the field of definition of the two tangent directions at the node (i.e. the two lines making up the tangent cone), and more generally, the type records information about the tangent cone to $\{ f = 0 \}$ at $x$. 

In general the $\A^1$-Milnor number of $f$ is an invariant of $f$ and the singularity $x$. Kass and the second named author show that the sum the $\A^1$-Milnor numbers of the singularities of $\{ f = 0\}$ is equal to a weighted count of nodes of hypersurfaces in a perturbed family.  This is written for the case where $n$ is even, but that is to be able to apply \cite[Lemma 39]{MR3909901}. It is not necessary for the proof: recording the information of $f$, an analogous result holds. 

More precisely, it is shown that for general $(a_1,\dots,a_n)\in \A^n(k)$
the family \[f(x_1,\dots,x_n)+a_1x_1+\dots+a_nx_n=t\]
over the affine $t$-line has only nodal singularities \cite[Lemma 43]{MR3909901} and $$\sum_{x \text{ singularity of } \{f=0\}}\mu^{\A^1}_x(f)$$ is equal to the sum $$\sum_{x \text{ node of } \{f(x)+ax=t\}} \Tr_{k(x)/k} \type(x) $$ of $\Tr_{k(x)/k} \type(x)$, where $x$ runs over the nodes of hypersurfaces in the $t$-family $f(x_1,\dots,x_n)+a_1x_1+\dots+a_nx_n-t)$ for fixed generic $(a_1,\ldots,a_n)$ in $k^n$.

\begin{theorem}\label{thm:Cor45KW19}
\cite[Corollary 45]{MR3909901} Let $f \in k[x_1,\ldots, x_n]$ be such that $\grad f$ is finite and separable. Then for $(a_1,\ldots,a_n) \in \A^n_k(k)$ a general $k$-point, the family $$\A^n_k \to \A^1_k $$ \begin{equation}\label{family}x \mapsto f(x) - a_1 x -\ldots -a_n x_n \end{equation} has only nodal fibers. Suppose that the residue field of every zero of $\grad(f)$ is separable over $k$. Then we have an equality $$\sum_{x \text{ singularity of } \{f=0\}}\mu^{\A^1}_x(f) = \sum_{x \text{ node of \eqref{family}}} \Tr_{k(x)/k} \type(x,f) .$$ 

\end{theorem}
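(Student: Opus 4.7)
The plan is to interpret both sides of the equality as sums of local $\A^1$-degrees---of $\grad f$ on the left and of $\grad f_t$ on the right, where $f_t(x) := f(x) - a_1 x_1 - \ldots - a_n x_n$---and then to conclude by a conservation-of-degree argument under the linear $\A^1$-homotopy $s \mapsto \grad f - s\cdot (a_1,\ldots,a_n)$, $s \in \A^1$.

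The left-hand side is immediate from the definition $\mu^{\A^1}_x(f) = \deg^{\A^1}_x(\grad f)$. For the right-hand side, I would first invoke \cite[Lemma 43]{MR3909901} to know that for generic $a$ every critical point of $f_t$, i.e.\ every zero of $\grad f_t = \grad f - a$, is a node of its fibre of \eqref{family}. At such a node $x$ with separable residue field $k(x)$, a Taylor expansion over $k(x)$ puts $f_t - f_t(x)$ into the form $\sum_i c_i y_i^2 + (\text{higher order})$, so $\grad f_t$ has a simple zero at $x$ with Jacobian $\det \Hess f_t(x) = 2^n \prod_i c_i \neq 0$. Example \ref{example: local A1 degree separable field extension} then gives
\[\deg^{\A^1}_x(\grad f_t) = \Tr_{k(x)/k}\lra{2^n \prod_{i=1}^n c_i} = \Tr_{k(x)/k}\type(x,f_t) = \Tr_{k(x)/k}\type(x,f),\]
the last equality because adding a linear form to $f$ does not alter the Hessian at a critical point.

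It then remains to establish the global identity $\sum_x \deg^{\A^1}_x(\grad f) = \sum_x \deg^{\A^1}_x(\grad f - a)$. My plan is to interpret both sums as the $\A^1$-Euler number (in the sense of \cite{2017arXiv170801175K}) of a common rank-$n$ bundle on a projective compactification of $\A^n_k$: concretely, $\grad f$ extends to a section of $\calO(d-1)^{\oplus n}$ on $\PP^n_k$ (where $d = \deg f$), and perturbing by the constant $a$ only affects the section on the affine chart. A sum-of-local-degrees formula for the $\A^1$-Euler number should then apply uniformly in $s \in \A^1$ to the family $\grad f - s a$, yielding the desired equality. The main obstacle is precisely this last step: checking that no zero escapes to infinity under the deformation and that the top-degree behaviour of $\grad f$ at infinity contributes identically in both sums. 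Both are controlled by the fact that $\grad f$ and $\grad f - a$ have identical top-degree parts, together with the genericity of $a$; the separability hypothesis on the zeros of $\grad f$ ensures Example \ref{example: local A1 degree separable field extension} applies at every affine zero, so that the local EKL computations aggregate coherently in $\GW(k)$ to match the global Euler number on either side.
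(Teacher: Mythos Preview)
Your reduction is correct and matches the paper's strategy: both sides are sums of local $\A^1$-degrees of $\grad f$, taken over the fiber at $0$ on the left and over the fiber at $a$ on the right, and the computation $\deg^{\A^1}_x(\grad f_t) = \Tr_{k(x)/k}\type(x,f)$ at each node is exactly right.

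The genuine gap is in your conservation-of-degree step. The Euler-number route through $\calO(d-1)^{\oplus n}$ on $\PP^n_k$ runs into a relative orientability obstruction: one needs $\Hom(\det T\PP^n,\det V)\cong\calO(nd-2n-1)$ to be a square, which forces $n$ and $d$ both odd. So as written the argument does not cover the general case, and the ``main obstacle'' you flag is not resolved by the top-degree-parts observation alone.

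The paper (following \cite[Corollary 45]{MR3909901}) avoids compactification entirely. Since $\grad f:\A^n_k\to\A^n_k$ is finite, the Scheja--Storch construction \cite[Satz 3.3]{MR393056} produces a non-degenerate symmetric bilinear form on $k[x_1,\dots,x_n]$ as a finite free module over $k[y_1,\dots,y_n]$ (with $y_i=\partial_i f$), i.e.\ a family of forms over $\A^n_y$. Harder's theorem then gives that the specializations at $y=0$ and $y=a$ agree in $\GW(k)$. What remains is to identify each specialization with the corresponding sum of local $\A^1$-degrees. Over the generic fiber $y=a$ this is automatic since $\grad f$ is \'etale there; over $y=0$ this is exactly where the separability hypothesis enters, via \cite{brazelton2019trace} together with \cite[main theorem]{MR3909901} and \cite[Proposition 32]{2017arXiv170801175K}. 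Your proposal invokes separability only to apply Example~\ref{example: local A1 degree separable field extension}, but that example treats simple zeros, whereas the zeros of $\grad f$ over $0$ need not be simple---it is the identification of the Scheja--Storch form with the local $\A^1$-degree at a non-simple zero with separable residue field that requires the cited results.
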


\begin{proof}
The proof of \cite[Corollary 45]{MR3909901} in \cite{MR3909901} gives a statement with the additional hypotheses that $n$ is even and that every zero of $\grad(f)$ either has residue field $k$ or is in the \'etale locus of $\grad f$. The first hypothesis is removed by including the information of $f$ into $\type(x,f)$. The second hypothesis was present to ensure with the technology available at the time that the $\A^1$-local degree agrees with a bilinear form constructed in \cite[Satz 3.3]{MR393056}, which will be described here in Section \ref{subsection: dynamic interpretation}. It is weakened to the hypothesis that the zeros of $\grad(f)$ have residue field which is separable over $k$ by \cite[Theorem 1.3]{brazelton2019trace}, \cite[main theorem]{MR3909901} and \cite[Proposition 32]{2017arXiv170801175K}.
\end{proof}

\end{definition}
\begin{example}[Cusp continued]
In Example \ref{cusp_mu_classical}, we looked at the classical Milnor number of the cusp defined by $f=x_2^2-x_1^3$, and its bifurcation into nodes. We now enrich this example using Theorem \ref{thm:Cor45KW19}.  The $\A^1$-Milnor number of the cusp is 
\[\mu^{\A^1}(f)=\deg^{\A^1}_0\grad f=\deg^{\A^1}_0(3x_1^2,2x_2)=\langle1\rangle+\langle-1\rangle\in \GW(k).\]  (To see this, one can express $\deg^{\A^1}_0(3x_1^2,2x_2)$ as the product $$\deg^{\A^1}_0(3x_1^2,2x_2) = \deg^{\A^1}_0 (3 x_1^2)\deg^{\A^1}_0 (2 x_2) = \langle 3 \rangle \deg^{\A^1}_0 (x_1^2)\langle 2 \rangle ,$$ and the $\A^1$-degree $\deg^{\A^1}_0 (x_1^2)$ was computed to be $\langle1\rangle+\langle-1\rangle$ in Example \ref{EKLformz2}.)  As in Example \ref{cusp_mu_classical}, the cusp bifurcates into $2$ nodes. These nodes are either a pair of conjugate nodes defined over a separable degree $2$ extension of $k$, or $2$ rational nodes. For each of these nodes, the lines in the tangent cone have some fields of definition. Theorem \ref{thm:Cor45KW19} gives restrictions on what field extensions and tangent directions are possible, or in other words, Theorem \ref{thm:Cor45KW19} gives restrictions on the types of these nodes. For example, suppose the field $k$ is the finite field $\FF_5$ with $5$ elements. Then $\langle1\rangle+\langle-1\rangle$ has trivial discriminant. So it is not possible for any choice of perturbation for the cusp to bifurcate into $2$ rational nodes with one split and one non-split. Similarly, it is not possible for the cusp to bifurcate into a pair of conjugate nodes over the unique degree $2$ extension which are split, because $\Tr_{\FF_{5^2}/\FF_5} \langle -1 \rangle$ has nontrivial discriminant.

However, if instead $k=\FF_7$, then the cusp can not bifurcate into $2$ split rational nodes, or $2$ non-split rational nodes. The cusp over $\FF_7$ can also not bifurcate into pair of conjugate nodes over the unique degree $2$ extension which are split, because $\Tr_{\FF_{7^2}/\FF_7} \langle -1 \rangle$ has trivial discriminant. 
\end{example}

We want to give a different \emph{dynamic} interpretation of the $\A^1$-Milnor number using the \emph{dynamic} local degree used in \cite{pauli2020quadratic} to compute the local contributions of the 2875 distinguished lines on the Fermat quintic threefold. We also remove the sum on the left hand side, replacing it with an equation for $\mu^{\A^1}_x(f)$ as a sum of the nodes the $x$ bifurcates into. In practice, this happens with \cite[Corollary 45]{MR3909901} as well, for example in the cases where $x$ is the only singularity of $\{ f = 0\}$ or when the other singularities are nodes which remain nodes and make the same contribution to each side. However, it is more aesthetically pleasing to identify the nodes that the singularity bifurcates into and then have an equality between traces of types of these nodes and the $\A^1$-Milnor number of the singularity. This is what we do in Theorems \ref{A1-Milnor_number_sum_A1-Milnor_numbers_bifurcations} and \ref{A1-Milnor_number_sum_node-types}.

\subsection{A dynamic interpretation of the $\A^1$-Milnor number}
\label{subsection: dynamic interpretation}

Let $p$ be a singular point of the hypersurface $X_0 = \{ f = 0\} \hookrightarrow \A^n_k$, where $f$ is in $k[x_1,\ldots,x_n]$ and $k$ is a field. (We could also take $X_0 \hookrightarrow \PP^n_k$ and $f$ homogenous in $k[x_0,\ldots,x_n]$.) Assume that $\grad f$ has an isolated zero at $p$, allowing the $\A^1$-Milnor number $\mu^{\A^1}(f,p)$ of $f$ at $p$ to be defined, as discussed above. We use S. Pauli's enrichment of dynamic intersection numbers to allow non-linear deformations of $f$ in Theorem~\ref{thm:Cor45KW19}, and replace the sum by the $\A^1$-Milnor number itself: We show that under a generic deformation of $f$ over $k$, the singularity $p$ bifurcates into nodes, and letting $\operatorname{Nodes}$ denote the set of these nodes, we have that the $\A^1$-Milnor number at $p$ is the sum $$\mu^{\A^1}(f,p) = \sum_{x \in \operatorname{Nodes}(p)} \Tr_{k(x)/k}\type(x).$$ As above, $\type(x)$ is the type of Definition \ref{df:type}, and records information about the tangent cone at $p$. 

For $g \in k[x_1,\ldots,x_n][[t]]$, consider $f_t = f + t g$ in $k[x_1,\ldots,x_n][[t]]$, defining a deformation $$X = \{ f + t g = u \} \hookrightarrow  \A^n_{k[u][[t]]}$$ of $X$. Let $Y = \{ \grad (f + t g) = 0 \} \hookrightarrow  \A^n_{k[[t]]}$, where $\grad$ denotes the $n$-tuple of partial derivatives with respect to the variables $x_i$ for $i=1,\ldots,n$.  

We will let a $0$-subscript denote the special fiber of a scheme over $\Spec k[[t]]$, e.g., $Y_0 = \Spec k \times_{\Spec k[[t]]} Y$. Then $Y_0$ corresponds to the singularities of the family of varieties $\{ f = u \}$ parametrized by $\Spec k[u]$ and the generic fiber $$Y_{\gen} = \Spec k((t)) \times_{\Spec k[[t]]} Y$$ of $Y$ corresponds to the singularities of the family of varieties $ \{ f + t g = u \}$ parametrized by $\Spec k((t))[u]$.

By \cite[Lemma 10.152.3. (12) Tag 04GE]{stacks-project}, $Y = Y^f \coprod Y^{\geq 1}$, where $Y^f \to \Spec k[[t]]$ is finite and $Y^{\geq 1}$ has all components of its special fiber of dimension $\geq 1$, i.e. $Y^{\geq 1}_0 $ is a union of positive dimensional $k$-varieties. Let $Y^p$ be the union of the irreducible components of $Y$ containing $p$. Since $\grad f$ has an isolated $0$ at $p$, the ring $\calO_{Y_0,p}$ is a finite $k$-module, and it follows that $Y^p$ is a closed subscheme of $Y^f$. Thus $Y^p \to \Spec k[[t]]$ is finite.

\begin{lemma}\label{calOY^p_0_local}
$p$ is the only point of $Y^p_0$, and $\Gamma(\calO_{Y^p})$ is a local ring.
\end{lemma}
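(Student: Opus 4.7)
The plan is to leverage the finiteness of $Y^p \to \Spec k[[t]]$ to decompose $Y^p$ as a disjoint union of connected local pieces, then use the defining property that every irreducible component of $Y^p$ contains $p$ to force there to be only one such piece.

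First I would set $A = \Gamma(\calO_{Y^p})$. Since $Y^p \to \Spec k[[t]]$ is finite and $k[[t]]$ is a complete local Noetherian ring, $A$ is a finite $k[[t]]$-module. In particular $A$ is $(t)$-adically complete and semilocal, with every maximal ideal lying over $(t)\subset k[[t]]$. The quotient $A/tA$ is then a finite-dimensional $k$-algebra, hence Artinian, and decomposes as a product $A/tA \cong \prod_{i=1}^r B_i$ of local Artinian $k$-algebras corresponding to the finitely many maximal ideals $\mathfrak{m}_1,\ldots,\mathfrak{m}_r$ of $A$.

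Next, because $A$ is $(t)$-adically complete, the orthogonal idempotents giving this product decomposition of $A/tA$ lift to orthogonal idempotents of $A$ (Hensel's lemma for idempotents, e.g.\ \cite[Tag 04GG]{stacks-project}), yielding a product decomposition $A \cong \prod_{i=1}^r A_i$ with each $A_i$ local. Geometrically this exhibits $Y^p = \coprod_{i=1}^r \Spec A_i$ as the disjoint union of its connected components.

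Now comes the key observation. In a disjoint union, the irreducible components are exactly the irreducible components of the pieces, so each irreducible component of $Y^p$ lies entirely inside one $\Spec A_i$. The point $p$ lies in exactly one piece, say $\Spec A_1$. But by the very definition of $Y^p$ as the union of the irreducible components of $Y$ containing $p$, every irreducible component of $Y^p$ contains $p$, and hence is contained in $\Spec A_1$. Since $Y^p$ is the union of its irreducible components, this forces $r=1$ and $Y^p = \Spec A_1$, so $A = \Gamma(\calO_{Y^p})$ is local. Finally, the special fiber $Y^p_0 = \Spec(A/tA)$ is then a local Artinian scheme, which has a unique point; as $p$ is a point of $Y^p_0$, that point must be $p$.

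The only mildly subtle step is the lifting of idempotents from $A/tA$ to $A$, which requires $(t)$-adic completeness of $A$; this is automatic since $A$ is finite over the complete local ring $k[[t]]$. The rest of the argument is bookkeeping about irreducible versus connected components, and I do not anticipate any serious obstacle.
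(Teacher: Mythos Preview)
Your proof is correct, but it takes a different route from the paper's. The paper argues geometrically: for each point $x$ of the generic fiber $Y^p_{\gen}$, it passes to the ring of integers $R$ in the complete valued field $k(x)$, invokes the valuative criterion of properness to obtain a finite map $\Spec R \to Y^p$, and observes that the image is an irreducible component (hence contains $p$) while $\Spec R$ has a unique closed point; this forces $p$ to be the unique point of $Y^p_0$. Your argument instead exploits the Henselian nature of $k[[t]]$ directly: lift idempotents from $A/tA$ to $A$ to split $Y^p$ into connected components, then use that every irreducible component of $Y^p$ contains $p$ to conclude there is only one piece. Your approach is shorter and more self-contained for the lemma at hand; the paper's approach has the advantage that the maps $\Spec R \to Y^p$ it constructs are reused later (just before Theorem~\ref{A1-Milnor_number_sum_node-types}) to write down explicit power-series equations for the nodes that $p$ bifurcates into.
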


\begin{proof}
Consider the pullback diagram $$ \xymatrix{ Y^p_{\gen} \ar[d] \ar[r]^{\eta'} & Y^p \ar[d] & \ar[l]^{s'} \ar[d] Y^p_0 \\ \Spec k((t)) \ar[r]^{\eta} & \Spec k[[t]] & \ar[l]^s \Spec k[[t]]/\langle t \rangle } $$ A point $x$ of $Y^p_{\gen}$ has a field of definition $L := k(x)$ which is a finite extension of $k((t))$, and therefore a complete valued field. The integral closure $R$ of $k[[t]]$ in $L$ is the ring of integers of $L$ and is finite over $k[[t]]$ by \cite[Proposition 6.4.1/2, Chapter 6, p. 250]{MR746961}. Applying the valuative criteria of properness, we have a unique diagonal arrow in the commutative diagram $$
\xymatrix{\Spec L \ar[r] \ar[d]& Y^p \ar[d]\\
\Spec R \ar[r] \ar@{.>}[ur]& \Spec k[[t]]}
$$ which is moreover a finite map because $R$ is finite over $k[[t]]$ and $Y^p \to \Spec k[[t]]$ is separated. The image of $\Spec R$ in $Y^p$ is therefore a closed $1$-dimensional subscheme of $Y^p$, whence a component. Therefore it contains $p$. However $\Spec R$ has a unique point in the special fiber \cite[Theorem 3.2.4/2 Chapter 3 p. 139]{MR746961}. It follows that $p$ is the only point of $Y^p_0$. It follows from this that $\Gamma(\calO_{Y^p})$ is a local ring.

\end{proof}

The points of $Y^p$ are the singular point $p$ and the singularities $p$ bifurcates into. The latter are the singularities in the $\Spec k((t))[u]$-family $ \{ f + t g = u \}$ and are in one to one correspondence with points of $Y^p_{\gen}$.

Let $f_i$ and $g_i$ denote the partial derivatives $f_i : = \partial_{x_i} f$ and $g_i : = \partial_{x_i} g$, respectively. Since $Y^p$ is an open subset of $Y \cong \frac{k[x_1,\ldots,x_n][[t]]}{(f_1 + t g_1, \ldots, f_n + t g_n)},$ there is a multiplicatively closed subset $S \subset k[x_1,\ldots,x_n][[t]]$ such that $$Y^p \cong \frac{S^{-1}(k[x_1,\ldots,x_n][[t]])}{(f_1 + t g_1, \ldots, f_n + t g_n) }.$$ Let $\mathfrak{m} \subset k[x_1,\ldots,x_n][[t]]$ denote the maximal ideal containing $t$ corresponding to the point $p$. Since $\grad f$ has an isolated $0$ at $p$, the ring $\calO_{Y_0,p}$ is a finite $k$-module, and $\grad f$ determines a regular sequence in the local ring $k[x_1, \ldots, x_n]_{m_p}$, where $m_p=\mathfrak{m} \cap k[x_1, \ldots, x_n]$ denotes the prime ideal corresponding to $p$. 

\begin{proposition}
$Y^p \to \Spec k[[t]]$ is finite, and flat. Furthermore, $$f_1 + t g_1, \ldots, f_n + t g_n$$ is a regular sequence in the localization $S^{-1}(k[x_1,\ldots,x_n][[t]])_{\mathfrak{m}}$
\end{proposition}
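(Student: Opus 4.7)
The plan is to derive everything from the observation that $t$ together with $f_1+tg_1,\ldots,f_n+tg_n$ forms a system of parameters in the regular local ring $A := k[x_1,\ldots,x_n][[t]]_{\mathfrak{m}}$, after which the remaining assertions follow from commutative algebra over the DVR $k[[t]]$. The ring $A$ is regular local of dimension $n+1$: the ambient ring $k[x_1,\ldots,x_n][[t]]$ is Noetherian and regular, $\mathfrak{m}$ is the maximal ideal corresponding to $p$ in the special fiber (in particular contains $t$), and $\mathrm{height}(\mathfrak{m}) = n+1$ since $\mathfrak{m}/(t) = m_p$ has height $n$ in $k[x_1,\ldots,x_n]$ and $t$ is a non-zero-divisor in the Cohen--Macaulay ring $k[x_1,\ldots,x_n][[t]]$. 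Since $S$ is disjoint from $\mathfrak{m}$ (elements of $S$ do not vanish at $p \in Y^p$), the ring $S^{-1}(k[x_1,\ldots,x_n][[t]])_{\mathfrak{m}}$ coincides with $A$, so it suffices to verify the regular sequence statement inside $A$.

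The ideal $(t,\,f_1+tg_1,\ldots,f_n+tg_n)$ equals $(t,f_1,\ldots,f_n)$ in $A$, with quotient $A/(t,f_1,\ldots,f_n) \cong k[x_1,\ldots,x_n]_{m_p}/(f_1,\ldots,f_n)$, which is Artinian since $\grad f$ has an isolated zero at $p$. Hence $(t,\,f_1+tg_1,\ldots,f_n+tg_n)$ is $\mathfrak{m}$-primary and, being generated by $n+1 = \dim A$ elements in the $(n+1)$-dimensional regular local ring $A$, these generators form a system of parameters. Regular rings are Cohen--Macaulay, so every system of parameters is a regular sequence, and regular sequences in Cohen--Macaulay local rings may be permuted. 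Permuting $t$ to the end shows that $f_1+tg_1,\ldots,f_n+tg_n$ is a regular sequence in $A$---the main algebraic content of the proposition---and simultaneously that $t$ is a non-zero-divisor on the quotient $B := A/(f_1+tg_1,\ldots,f_n+tg_n)$.

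Finiteness of $Y^p \to \Spec k[[t]]$ was already noted in the paragraph preceding the proposition, since $Y^p \subseteq Y^f$ and $Y^f \to \Spec k[[t]]$ is finite. For flatness, Lemma~\ref{calOY^p_0_local} tells us that $\Gamma(\calO_{Y^p})$ is local, so together with the presentation $Y^p \cong S^{-1}(k[x_1,\ldots,x_n][[t]])/(f_1+tg_1,\ldots,f_n+tg_n)$ we identify $\Gamma(\calO_{Y^p})$ with $B$. Since $B$ is finite over the DVR $k[[t]]$, flatness of $B$ is equivalent to torsion-freeness, equivalent to $t$ being a non-zero-divisor on $B$, which we have just verified.

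The main obstacle is bookkeeping: one must correctly identify the $(n+1)$-dimensional regular local ring $A$ and check that the auxiliary localization at $S$ is absorbed by localizing at $\mathfrak{m}$, so that the isolated-zero hypothesis on $\grad f$ at $p$ can feed into the standard system-of-parameters argument in a Cohen--Macaulay local ring. Once this is in place, the DVR structure of $k[[t]]$ translates the regular sequence assertion into flatness via the torsion-free criterion.
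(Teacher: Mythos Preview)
Your proof is correct and follows essentially the same route as the paper's: both reduce to the Cohen--Macaulay local ring at $\mathfrak{m}$, use the isolated-zero hypothesis to see that $t,f_1+tg_1,\ldots,f_n+tg_n$ cuts down to dimension $0$, and invoke the Lemma that $\Gamma(\calO_{Y^p})$ is already local to remove the localization. The only minor difference is in the flatness step: the paper cites Matsumura's criterion (Cohen--Macaulay over regular with correct fiber dimension), whereas you use the more elementary observation that finite over a DVR plus $t$ a non-zero-divisor gives torsion-free, hence flat---which is a nice simplification.
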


\begin{proof}
We have already seen that $Y^p$ is finite over $\Spec k[[t]]$. Since $$S^{-1}(k[x_1,\ldots,x_n][[t]])$$ is regular of dimension $n+1$, it is Cohen-Macaulay. Moreover the quotient $S^{-1}(k[x_1,\ldots,x_n][[t]])/(f_1 + t g_1, \ldots, f_n + t g_n,t) \cong \calO_{Y^0,p}$ is a finite, local ring of dimension $0$ by the assumption that $p$ is an isolated zero of $\grad f$. It follows from \cite[Lemma 10.103.2 TAG 00N7]{stacks-project} that $f_1 + t g_1, \ldots, f_n + t g_n,t$ is a regular sequence in $S^{-1}(k[x_1,\ldots,x_n][[t]])_{\mathfrak{m}}$ and the quotient $$S^{-1}(k[x_1,\ldots,x_n][[t]])_{\mathfrak{m}}/(f_1 + t g_1, \ldots, f_n + t g_n)$$ is Cohen-Macaulay of dimension $1$. Since  $\Gamma(\calO_{Y_p})$ is a local ring, we may remove the previous localization at $\mathfrak{m}$ giving the statement that $S^{-1}(k[x_1,\ldots,x_n][[t]])/(f_1 + t g_1, \ldots, f_n + t g_n)$ is Cohen-Macaulay of dimension $1$.  It follows from \cite[Theorem 23.1 p. 179]{Matsumura_CRT} that $Y^p \to \Spec k[[t]]$ is also flat, proving the proposition. 
 
\end{proof}

Since $\calO_{Y^p}$ is flat over $k[[t]]$ it is a locally free, and even free $k[[t]]$-module. The presentation $\calO_{Y^p} \cong \frac{S^{-1}(k[x_1,\ldots,x_n][[t]])}{(f_1 + t g_1, \ldots, f_n + t g_n) }$ moreover determines a $k[[t]]$-bilinear form over $\calO_{Y^p}$ in the following manner. 

The regular sequence $f_1 + t g_1, \ldots, f_n + t g_n$ determines a distinguished isomorphism $$\chi(\Delta): \Hom_{k[[t]]}(\calO_{Y^p}, k[[t]])  \stackrel{\cong}{\to} \calO_{Y^p}$$ following work of Scheja and Storch \cite{MR393056}, giving a version of the Eisenbud--Levine/Khimshiashvili form which works in families. Namely, we can choose $a_{ij}$ in $k[x_1,\ldots,x_n][[t]] \otimes_{k[[t]]} k[x_1,\ldots,x_n][[t]]$ such that $$ (f_i + t g_i) \otimes 1 - 1 \otimes (f_i + t g_i) = \sum_j a_{ij}(x_j \otimes 1 - 1 \otimes x_j).$$ Let $\Delta$ denote the image of $\det (a_{ij})$ in $\calO_{Y^p} \otimes \calO_{Y^p}$. It is shown  \cite[Satz 3.1]{MR393056} that $\det (a_{ij})$ is independent of the choice of $a_{ij}$. Let $$\chi: \calO_{Y^p} \otimes \calO_{Y^p} \to \Hom_{k[[t]]}(\Hom_{k[[t]]}(\calO_{Y^p}, k[[t]]), \calO_{Y^p})$$ denote the map $$b \otimes c \mapsto (\phi \mapsto \phi(b) c)$$ Scheja and Storch show \cite[Satz 3.3]{MR393056} that $\chi(\Delta)$ is an isomorphism. 

Let $\ev_1: \Hom_{k[[t]]}(\calO_{Y^p}, k[[t]]) \to k[[t]]$ denote the evaluation at $1 \in \calO_{Y^p}$, sending $\eta$ in $\Hom_{k[[t]]}(\calO_{Y^p}, k[[t]])$ to $\eta(1)$. $\ev_1$ corresponds to the trace \cite[p.7 (b)3 Ideal theorem]{MR0222093}. Thus by Grothendieck--Serre duality  \cite[p 7 b) c) Ideal theorem]{MR0222093}, the composition \begin{equation}\label{eq:mupf_tg}\calO_{Y^p} \times \calO_{Y^p} \to  \calO_{Y^p} \stackrel{\chi(\Delta)^{-1}}{\to}  \Hom_{k[[t]]}(\calO_{Y^p}, k[[t]]) \stackrel{\ev_1}{\to} k[[t]] \end{equation} of multiplication with $\chi(\Delta)^{-1}$ and $\ev_1$ is non degenerate.

\begin{definition}
Let $\mu^{\A^1}_p(f+tg)$ be the element of $\GW(k[[t]])$ corresponding to the pairing \eqref{eq:mupf_tg}.
\end{definition}

We have maps $\GW(k[[t]]) \to \GW(k)$ and $\GW(k[[t]]) \to \GW(k((t)))$ associated to the ring maps $k[[t]] \to k$ and $k[[t]] \to k((t))$. By construction, the image of $\mu^{\A^1}_p(f+tg)$ in $\GW(k)$ is $\mu^{\A^1}_p(f)$ and the image in $\GW(k((t)))$ is the sum over the points of the generic fiber $ x \in Y^p_{\gen}$ of $\mu^{\A^1}_x (f+tg - u(x))$. Since $u(x)$ does not effect the pairing on $k((t))[x_1,\ldots, x_n]/ (f_1 + t g_1, \ldots, f_n + t g_n)$, it is natural to let $\mu^{\A^1}_x(f+ tg) = \mu^{\A^1}_x (f+tg - u(x))$.

\begin{example}\label{example:cusp_continuted_with_node_equations}[Cusp continued]
Recall that for the cusp equation $f=x_2^2-x_1^3$ the $\A^1$-Milnor number $\mu_0^{\A^1}(f)$ is equal to the hyperbolic form $\langle1\rangle+\langle-1\rangle\in \GW(k)$. So we expect the singularity of the cusp to bifurcate into two nodes such that the sum of the types of these nodes is the hyperbolic form.

Let $g=3x_1+2x_2+2x_1^3-tx_1^3$. Then $f+tg$ has partial derivatives
\[\partial_{x_1}(f+tg)=-3x_1^2+3t+6tx_1^2-3t^2x_1^2\]
and 
\[\partial_{x_2}(f+tg)=2x_2+2t.\]
Setting both partial derivatives equal to zero, we get that f+tg has two critical points, namely
\[x_1=\frac{\sqrt{t}}{1-t}\text{, }x_2=-t\]
and
\[x_1=-\frac{\sqrt{t}}{1-t}\text{, }x_2=-t\]
both defined over $k((t^{1/2}))$. The sum of the $\A^1$-Milnor numbers at these nodes is
\begin{align*}\Tr_{k((t^{1/2}))/k((t))}(\mu^{\A^1}_{(\frac{\sqrt{t}}{1-t},-t)}(\grad(f+tg)))&=\Tr_{k((t^{1/2}))/k((t))}(\langle12\sqrt{t}(1-t)\rangle)\\
&=\langle1\rangle+\langle-1\rangle\in \GW(k((t))).\end{align*}

\end{example}

We have that the $\A^1$-Milnor number at $p$ is the sum of the $\A^1$-Milnor numbers of the singularities $p$ bifurcates into.

\begin{theorem}\label{A1-Milnor_number_sum_A1-Milnor_numbers_bifurcations}
Let $k$ be a field and let $X = \{f=0\}$ determine a hypersurface in $\A^n_k$. Let $p$ be a singularity of $X$ which is an isolated zero of $\grad f$.\footnote{The condition that $p$ is an isolated zero of $\grad f$ is implied by $p$ being an isolated singularity of $X$ if the characteristic of $k$ is $0$.} Then for any $g$ in $k[x_1, \ldots, x_n][[t]]$, the $\A^1$-Milnor number $\mu^{\A^1}_p(f)$ of $f$ at $p$ equals the sum of the $\A^1$-Milnor numbers of the singularities of the deformation $\{ f+ tg = u\}$ that $p$ bifurcates into: $$\mu^{\A^1}_p(f) = \sum_{x \in Y^p_{\gen}} \mu^{\A^1}_x (f+ tg).$$ Here, $\GW(k)$ is viewed as a subring of $\GW(k((t)))$ via the canonical injection. In particular, the right hand side is necessarily in $\GW(k)$.
\end{theorem}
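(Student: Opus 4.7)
The plan is to prove the theorem by specializing a single class $\mu^{\A^1}_p(f+tg) \in \GW(k[[t]])$, already constructed via the family Scheja--Storch pairing \eqref{eq:mupf_tg}, in two complementary ways. The ring maps $k[[t]] \twoheadrightarrow k$ (reduction modulo $t$) and $k[[t]] \hookrightarrow k((t))$ induce homomorphisms $r_0 \colon \GW(k[[t]]) \to \GW(k)$ and $j \colon \GW(k[[t]]) \to \GW(k((t)))$. By the examples recalled in Section~\ref{section: GW(k)}, $r_0$ is an isomorphism, while $j$ identifies $\GW(k[[t]])$ with the kernel of the second residue homomorphism attached to $(t)$. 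Consequently the composite $j \circ r_0^{-1} \colon \GW(k) \hookrightarrow \GW(k((t)))$ is the canonical injection $\langle a\rangle \mapsto \langle a\rangle$. Once both specializations are identified, the equality in $\GW(k((t)))$ forces the right-hand side to lie in $\GW(k)$ and coincide with the left-hand side there.

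What remains is to verify the two \emph{base-change compatibilities} of the Scheja--Storch form, both of which are essentially formal from its construction. For $r_0$, tensoring $\calO_{Y^p}$ with $k$ over $k[[t]]$ yields $\calO_{Y^p_0}$, and Lemma~\ref{calOY^p_0_local} identifies this with $\calO_{Y_0,p} \cong k[x_1,\ldots,x_n]_{m_p}/(f_1,\ldots,f_n)$; one checks that the $a_{ij}$ representing the witnesses $(f_i + tg_i) \otimes 1 - 1 \otimes (f_i + tg_i)$ reduce to valid witnesses for $f_i \otimes 1 - 1 \otimes f_i$, so $\Delta$ reduces to the distinguished socle element defining $\mu^{\A^1}_p(f)$, and the trace $\ev_1$ is compatible with reduction modulo $t$. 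This gives $r_0(\mu^{\A^1}_p(f+tg)) = \mu^{\A^1}_p(f)$.

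For $j$, tensoring with $k((t))$ gives $\calO_{Y^p} \otimes_{k[[t]]} k((t)) \cong \calO_{Y^p_{\gen}}$, and because $Y^p_{\gen}$ is zero-dimensional over the field $k((t))$ this splits as a finite product $\prod_{x \in Y^p_{\gen}} \calO_{Y^p_{\gen},x}$. Under this product decomposition, the regular sequence $f_1 + tg_1, \ldots, f_n + tg_n$ localizes at each $x$, so the distinguished element $\Delta$ decomposes as a product $\prod_x \Delta_x$ (where $\Delta_x$ is the Scheja--Storch element at $x$), and the trace form $\ev_1$ splits as a sum of traces over the factors. Hence the bilinear form on $\calO_{Y^p_{\gen}}$ decomposes as the orthogonal direct sum $\sum_{x \in Y^p_{\gen}} \mu^{\A^1}_x(f+tg)$, giving $j(\mu^{\A^1}_p(f+tg)) = \sum_{x \in Y^p_{\gen}} \mu^{\A^1}_x(f+tg)$. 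Combined with the previous step and the injection $\GW(k) \hookrightarrow \GW(k((t)))$, this proves the theorem.

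The main obstacle is verifying the base-change compatibility at the generic fiber cleanly: one needs the product decomposition of $\calO_{Y^p_{\gen}}$ to respect \emph{both} the Scheja--Storch isomorphism $\chi(\Delta)$ and the Grothendieck--Serre duality trace $\ev_1$ simultaneously, so that the resulting bilinear form really is the orthogonal sum of the local forms $\mu^{\A^1}_x(f+tg)$ rather than merely having the right rank or determinant. The reduction at $t=0$ is comparatively routine since the family is flat and $Y^p$ is local by Lemma~\ref{calOY^p_0_local}, so the socle element and trace descend without ambiguity.
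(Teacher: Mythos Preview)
Your proposal is correct and follows essentially the same approach as the paper: both specialize the family Scheja--Storch class $\mu^{\A^1}_p(f+tg)\in\GW(k[[t]])$ under the two maps $\GW(k[[t]])\to\GW(k)$ and $\GW(k[[t]])\to\GW(k((t)))$, using that the first is an isomorphism inverse to the inclusion $\GW(k)\hookrightarrow\GW(k[[t]])$. The only difference is that the paper asserts the two base-change compatibilities ``by construction'' in the discussion preceding the theorem, while you spell out the verification (reduction of the $a_{ij}$ and $\Delta$ modulo $t$, and the product decomposition at the generic fiber respecting $\chi(\Delta)$ and $\ev_1$) in more detail.
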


\begin{remark}
Recall that $Y^p$ was defined above to be the union of the components of $Y = \{ \grad (f + t g) = 0 \} \hookrightarrow  \A^n_{k[[t]]}$ containing $p$, and $Y^p_{\gen}$ denotes its generic fiber. Its points are the singularities of the $u$-family of deformations $\{ f+ tg = u\}$ that $p$ bifurcates into.
\end{remark}

\begin{proof}
We showed above that $\mu^{\A^1}_p (f)$ is the image under $\GW(k[[t]]) \to \GW(k)$ of a well-defined $\mu^{\A^1}_p(f+tg)$ in $\GW(k[[t]])$. The map $\GW(k[[t]]) \to \GW(k)$ is an isomorphism with inverse given by the map corresponding to the inclusion of rings $k \subset k[[t]]$. The sum $\sum_{x \in Y^p_{\gen}} \mu^{\A^1}_x (f+ tg)$ is the image of  $\mu^{\A^1}_p(f+tg)$ under $\GW(k[[t]]) \to \GW(k((t)))$, whence it equals $\mu^{\A^1}_p(f)$ as claimed.
\end{proof}

We now specialize Theorem \ref{A1-Milnor_number_sum_A1-Milnor_numbers_bifurcations} to the case where $p$ bifurcates into nodes, where it becomes the statement that the $\A^1$-Milnor number of $p$ is the sum of the types of these nodes, enriching the result described at the beginning of Section \ref{subsection: C Milnor numbers}.

The condition that $p$ bifurcates into nodes is equivalent to the statement that the Hessian (determinant) of $f+tg$ is non-zero at all the singularities $p$ bifurcates into. Since the Hessian determinant is the Jacobian element of $\grad(f + tg)$, this is equivalent to the statement that $Y^p_{\gen} \to \Spec k((t))$ is \'etale.

We give some criteria for this to happen.

\begin{proposition}\label{Xeta_etale_g_after_linear_mod_under_separability}
For $h$ in $k[x_1,\ldots, x_n][[t]]$ such that $\grad(f + th): \A^n_{k((t))} \to \A^n_{k((t))}$ is a finite, separable map, there exist infinitely many $(a_1, \ldots, a_n)$ with $a_i$ in $k[[t]]$ for $i=1,\ldots,n$ such that $Y^p_{\gen} \to \Spec k((t))$ is \'etale for $g = h - \sum_{i=1}^n a_i x_i$.
\end{proposition}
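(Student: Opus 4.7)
The plan is to realize $Y^p_{\gen}$ inside the fiber of the given finite separable map $\grad(f+th)$ over a well-chosen $k((t))$-point, and then use a density argument to show that infinitely many choices of $a = (a_1,\ldots,a_n) \in k[[t]]^n$ land in the étale locus after multiplication by $t$. For $g = h - \sum_{i=1}^n a_i x_i$ one has $\grad(f+tg) = \grad(f+th) - t(a_1,\ldots,a_n)$, so $Y = \{\grad(f+tg)=0\}$ is the scheme-theoretic preimage of the section $t\cdot a$ under the map $\grad(f+th)\colon \A^n_{k[[t]]} \to \A^n_{k[[t]]}$. Passing to the generic fiber, $Y_{\gen}$ is the fiber of $\grad(f+th)\colon \A^n_{k((t))} \to \A^n_{k((t))}$ over the point $t\cdot a \in \A^n(k((t)))$.

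It therefore suffices to arrange that $t\cdot a$ lie in the étale locus of $\grad(f+th)$: then $Y_{\gen}$ is étale over $\Spec k((t))$, and since $Y^p \to \Spec k[[t]]$ is finite and flat every irreducible component of $Y^p$ dominates $\Spec k[[t]]$, so $Y^p_{\gen}$ is a union of irreducible components of $Y_{\gen}$ and is itself étale. By hypothesis $\grad(f+th)$ is finite and separable between regular $k((t))$-schemes of the same dimension; finiteness plus miracle flatness yields flatness, and separability of the induced function field extension produces a nonempty Zariski open locus $U \subseteq \A^n_{k((t))}$ over which $\grad(f+th)$ is étale. Multiplication by $t$ is an automorphism of $\A^n_{k((t))}$, so the preimage $U'$ of $U$ under this automorphism is again a nonempty open subset, and the problem reduces to producing infinitely many $a \in k[[t]]^n$ with $a \in U'$.

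Writing the closed complement $\A^n_{k((t))} \setminus U'$ as contained in the vanishing locus of a single nonzero polynomial $F \in k((t))[x_1,\ldots,x_n]$, and clearing denominators to assume $F \in k[[t]][x_1,\ldots,x_n]$, it remains to verify that the set $\{a \in k[[t]]^n : F(a) \neq 0\}$ is infinite. This is a standard fact: $k[[t]]$ is an infinite integral domain, and an induction on $n$, using at each step that a nonzero univariate polynomial over an integral domain has only finitely many roots, produces infinitely many non-vanishing $k[[t]]$-points of $F$. No serious obstacle arises; the only points requiring care are the density argument for $k[[t]]$-points and the verification that $Y^p_{\gen}$ inherits étaleness from $Y_{\gen}$, both handled above.
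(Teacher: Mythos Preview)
Your proposal is correct and follows essentially the same approach as the paper: both identify $Y_{\gen}$ as the fiber of $\grad(f+th)$ over the point $t\cdot a$, produce a nonempty open $U$ in the target $\A^n_{k((t))}$ over which $\grad(f+th)$ is \'etale, and then use an elementary polynomial nonvanishing argument (induction on $n$) to find infinitely many $a\in k[[t]]^n$ with $t\cdot a\in U$. The only cosmetic differences are that you obtain $U$ via miracle flatness and the branch locus, whereas the paper pushes forward the non-\'etale locus using finiteness and takes complements; and you phrase the density step through the automorphism $x\mapsto tx$ of $\A^n_{k((t))}$, whereas the paper works directly with points of the form $(ta_1,\ldots,ta_n)$.
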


The assumption that $\grad(f + th)$ is separable means that the associated extension of function fields is a separable extension and in particular, this is automatic in characteristic $0$.  

\begin{proof}
Since $\grad(f + th): \A^n_{k((t))} \to \A^n_{k((t))}$ is a separable map, it is generically \'etale. Thus there is a non-empty open subset of points at which $\grad(f+th)$ is \'etale. The image of the complement is closed because $\grad(f + th)$ is finite. Thus there is a non-empty open subset $U \subseteq \A^n_{k((t))}$ such that $\grad(f + th)$ is \'etale on points of $\grad(f+th)^{-1}(U)$. 

We claim that $U$ contains infinitely many points of the form $(t a_1, \ldots, t a_n)$ with $a_i$ in $k[[t]]$. The complement of $U$ is a proper closed subset of $\A^n_{k((t))}$. It is therefore contained in the zero locus of some polynomial $P$. Fixing $n-1$ of the variables to be values of the form $t a_i$ with $a_i$ in $k[[t]]$ such that the resulting polynomial in the last variable is not the zero polynomial (which is possible by induction) results in finitely many excluded values for the last variable.

For any point $(t a_1, \ldots, t a_n)$ with $a_i$ in $k[[t]]$, we claim the deformation $g = h - \sum_{i=1}^n a_i x_i$ has the desired property. For points of $\A^n_{k((t))}$ where $\grad (f+ t g) = 0$, we have that $\grad(f+ th) = (t a_1, \ldots t a_n)$. By choice of the $a_i$, we have that the Jacobian determinant of $\grad(f + th)$ is non-zero. This Jacobian determinant is also called the Hessian (determinant) of $f+t h$, which equals the Hessian of $f+ tg$. Thus the Hessian of $(f+ t g)$ is non-zero at the zero locus of $\grad (f+ t g) = 0$ in $\A^n_{k((t))}$. Thus $Y_{\gen} \to \Spec k((t))$ and in particular $Y^p_{\gen} \to \Spec k((t))$ is \'etale.
\end{proof}

We wish to make a precise statement of the form that for a generic deformation $g$, the singularity $p$ bifurcates into nodes. One option is the following Proposition \ref{Xeta_etale_generic_g}. The hypothesis on the behavior of $f$ at infinity should be irrelevant under an appropriate reformulation, but we keep it here for present lack of a better option. 

\begin{proposition}\label{Xeta_etale_generic_g}
Let $d$ be the degree of $f$ in $k[x_1, \ldots, x_n]$, and let $F$ in $k[x_0,\ldots, x_n]$ denote the degree $d$ homogenization of $f$. Suppose that $\partial_{x_i} F= 0$ for $i>0$ has no solutions in $\{ x_0 = 0\} \hookrightarrow \PP^n_k$ and that $\grad f: \A^n_k \to \A^n_k$ is finite and separable. Then a generic polynomial $g \in k[x_1, \ldots, x_n][[t]]$ of degree $< d$ has the property that $Y^p_{\gen} \to \Spec k((t))$ is \'etale for the deformation $f+ tg$. 
\end{proposition}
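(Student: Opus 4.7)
The plan is to verify the hypotheses of Proposition~\ref{Xeta_etale_g_after_linear_mod_under_separability} for \emph{every} $g$ of degree $<d$ and then use its linear-modification conclusion to promote the nonemptiness of the good locus to a genericity statement.

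First, I would establish finiteness of $\grad(f+tg)$ for every $g$ of degree $<d$. Writing $f = f_d + f_{d-1} + \cdots + f_0$ in homogeneous components, the infinity hypothesis is equivalent to saying that the degree $d-1$ forms $\partial_{x_i} f_d$ for $i=1,\ldots,n$ have no common zero in $\PP^{n-1}$. By the standard criterion that a polynomial endomorphism of $\A^n_k$ is finite if and only if the top-degree parts of its components have no common zero at infinity, this is precisely what makes $\grad f: \A^n_k \to \A^n_k$ finite. Since $g$ has degree $<d$, the degree-$d$ part of $f+tg$ is still $f_d$, so the identical criterion applies to $\grad(f+tg): \A^n_{k[[t]]} \to \A^n_{k[[t]]}$, yielding finiteness over $k[[t]]$ and hence on the generic fiber over $k((t))$.

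Next, I would check separability of $\grad(f+tg)$ over $k((t))$. Separability of $\grad f$ says $\det\Hess(f) \in k[x_1,\ldots,x_n]$ is nonzero; since the Hessian matrix satisfies $\Hess(f+tg) \equiv \Hess(f) \pmod t$, its determinant reduces mod $t$ to $\det\Hess(f) \neq 0$ and is therefore a nonzero element of $k((t))[x_1,\ldots,x_n]$. This determinant is the Jacobian of $\grad(f+tg)$, so the map is generically \'etale on $\A^n_{k((t))}$, in particular separable. Applying Proposition~\ref{Xeta_etale_g_after_linear_mod_under_separability} with $h=g$ then produces infinitely many $(a_1,\ldots,a_n) \in k[[t]]^n$ for which $g - \sum a_i x_i$, still of degree $<d$, makes $Y^p_{\gen}\to\Spec k((t))$ \'etale. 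This exhibits the good locus as nonempty and stable under a dense family of linear perturbations. Since \'etaleness of the finite $k((t))$-scheme $Y^p_{\gen}$ amounts to nonvanishing of $\det\Hess(f+tg)$ at each of its finitely many points, it is a Zariski open condition on the coefficients of $g$, and the good locus is a nonempty Zariski open subset of the parameter space.

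The main obstacle is giving ``generic'' a precise meaning over the infinite-dimensional space of polynomials with coefficients in $k[[t]]$; a natural fix is to reduce modulo a sufficiently high power of $t$ so that genericity refers to a Zariski open condition on a finite-dimensional truncation, with the \'etaleness condition descending because it is open and stable under small perturbations. A secondary technicality is verifying openness of \'etaleness uniformly as the points of $Y^p_{\gen}$ vary with $g$, which follows from the finite flatness of $Y^p$ over $k[[t]]$ established prior to the statement.
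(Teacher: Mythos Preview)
Your overall strategy is sound and your first two steps---verifying that $\grad(f+tg)$ is finite and separable over $k((t))$ for \emph{every} $g$ of degree $<d$---are correct and in fact prove more than the paper uses. The paper takes a different, more direct route: it views the parameter space of $g$'s as the finite-type scheme $\A^N_{k[[t]]}$, forms the incidence locus $X\cap Y=\{(g,x):\grad(F+tG)(x)=0=\Hess(F+tG)(x)\}$ inside $\A^N_{k[[t]]}\times\PP^n_{k[[t]]}$, and uses properness of the projection $\pi_1$ to $\A^N_{k[[t]]}$ to conclude immediately that the bad locus $\pi_1(X\cap Y)$ is closed. Nonemptiness of the good locus then comes from a single application of Proposition~\ref{Xeta_etale_g_after_linear_mod_under_separability} with $h=0$, rather than with $h=g$ for all $g$.

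Your two acknowledged obstacles are precisely what the paper's setup avoids. The ``infinite-dimensional'' worry disappears once you work over $k[[t]]$ as base: a polynomial of degree $<d$ with $k[[t]]$-coefficients is just a $k[[t]]$-point of $\A^N_{k[[t]]}$, so ``generic'' has its ordinary meaning for a finite-type scheme. Your ``secondary technicality''---openness of the \'etale condition as the points of $Y^p_{\gen}$ move with $g$---is the substantive step, and your appeal to finite flatness of $Y^p$ over $k[[t]]$ for a \emph{fixed} $g$ does not suffice; you need properness of the universal critical locus over the parameter space. You can repair this by noting that your finiteness argument (no common zeros of $\partial_{x_i}F$ at $x_0=0$) actually shows the universal critical locus is finite over $\A^N_{k[[t]]}$, hence proper, and then the image of its intersection with $\{\Hess=0\}$ is closed. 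But at that point you have reproduced the paper's argument with $\A^n$ and finiteness in place of $\PP^n$ and properness, and the detour through Proposition~\ref{Xeta_etale_g_after_linear_mod_under_separability} with general $h$ was unnecessary.
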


\begin{proof}

The space of polynomials $g \in k[x_1, \ldots, x_n][[t]]$ of degree $< d$ is an affine space $\A^N_{k[[t]]}$ for some $N$. Let $G$ denote the degree $d$ homogenization of $g$. The homogenization $F+tG$ of $f+tg$ has no solutions to $\partial_{x_i} (F+ tG)=0$ at points of  $\{x_0=0\}$ since for $x_0 = 0$, we have $\partial_{x_i} (F+t G) = \partial_{x_i} F$ for $i>0$ because the degree of $g$ is less than $d$. For notational simplicity, let $\grad(F+ tG) = (\partial_{x_1} (F+tG), \ldots, \partial_{x_n} (F+tG))$, so we have that $\{ x_0 = 0, \grad(F+tG)=0\}$ is empty.

Consider the projection $\pi_1: \A^N_{k[[t]]} \times \PP^n_{k[[t]]} \to \A^N_{k[[t]]}$. Let $X \hookrightarrow \A^N_{k[[t]]} \times \PP^n_{k[[t]]} $ be the closed subscheme determined by $X = \{(g,x): \grad(F+tG)(x) =0\}$. Let $\Hess(F+tG) = \det (\frac{\partial^2 (F+tG)}{\partial_{x_i}\partial_{x_j}})_{i,j=1}^n$ denote the Hessian (determinant), and let  $Y \hookrightarrow \A^N_{k[[t]]} \times \PP^n_{k[[t]]} $ be the closed subscheme determined by $Y = \{(g,x): \Hess(F+tG)(x) =0\}$.

Since $\pi_1$ is proper, $\pi_1(X \cap Y)$ is a closed subset of $\A^N_{k[[t]]}$ and it suffices to show that this closed subset is not the entirety of $\A^N_{k[[t]]}$. This follows by Proposition \ref{Xeta_etale_g_after_linear_mod_under_separability} applied in the case that $h=0$ is the zero polynomial. 
\end{proof}

When $Y^p_{\gen} \to \Spec k((t))$ is \'etale, its (finitely many) points correspond to nodes on hypersurfaces $\{ f+ tg = u\} \hookrightarrow \A^n_{k((t))}$. These nodes extend to integral points with special fiber $p$ (see the proof of Lemma \ref{calOY^p_0_local}), and all the singularities in the family of hypersurfaces $\{ f+ tg = u\} \hookrightarrow \A^n_{k((t))}$ specializing to $p$ correspond to points of $Y^p_{\gen}$. In other words, the singularity $p$ bifurcates into a set of nodes, and these nodes are the points of $Y^p_{\gen}$. We now denote the set of points of $Y^p_{\gen}$ by $\Nodes(p)$.

When $k$ is {\bf characteristic $0$}, we can find equations for these nodes, as well as their extensions to integral points containing $p$ in the special fiber in the following manner. The assumption on the characteristic implies that the algebraic closure of of $ k((t))$ is $\cup_{k \subseteq L, n} L((t^{1/n}))$ \cite[IV Section 2 Proposition 8]{Serre-Local_fields}.\footnote{The reference proves the claim for $k$ algebraically closed. The stated result follows by showing that the coefficients of an algebraic power series lie in a finite extension of $k$. Moreover, by \cite{Kedlaya_alg_closure_power_series_positive_characteristic} \cite{Kedlaya_alg_generalized_pwr_series} a perfect extension of a tamely ramified extension of $k((t))$ lies in $\cup_{k \subseteq L, n} L((t^{1/n}))$, even without the assumption on the characteristic.} A point of $Y^p_{\gen}$ at which $Y^p_{\gen} \to \Spec k((t))$ is \'etale therefore determines a commutative diagram $$\xymatrix{\Spec L ((t^{1/n})) \ar[r] \ar[d] & Y^{p} \ar[d]\\ \Spec L [[t^{1/n}]] \ar[r] \ar@{.>}[ur]&\Spec k[[t]]  }.$$ By the valuative criteria of properness, we have the dotted arrow, whence $$x : \Spec L [[t^{1/n}]] \to  Y^{p}.$$ Therefore our point corresponds to a $n$-tuple of power series in $t^{1/n}$ with coefficients in $L$. See Example \ref{example:cusp_continuted_with_node_equations} for equations in $k((t^{1/2}))$ for the two nodes degenerating to the cusp. 

\begin{theorem}\label{A1-Milnor_number_sum_node-types}
Let $k$ be a field and let $X = \{f=0\}$ determine a hypersurface in $\A^n_k$. Let $p$ be a singularity of $X$ which is an isolated zero of $\grad f$. Then for any $g$ in $k[x_1, \ldots, x_n][[t]]$ such that $Y^p_{\gen} \to \Spec k((t))$ is \'etale, the $\A^1$-Milnor number $\mu^{\A^1}_p(f)$ of $f$ at $p$ equals the sum of the transfers of the types of the nodes that $p$ bifurcates into: $$\mu^{\A^1}_p(f) = \sum_{\Nodes p} \Tr_{k(x)/k((t))} \type(x).$$ Here, $\GW(k)$ is viewed as a subring of $\GW(k((t)))$ via the canonical injection. In particular, the right hand side is necessarily in $\GW(k)$.
\end{theorem}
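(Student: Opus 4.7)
The plan is to apply Theorem \ref{A1-Milnor_number_sum_A1-Milnor_numbers_bifurcations} and then identify each summand $\mu^{\A^1}_x(f+tg)$, for $x\in Y^p_{\gen}$, with $\Tr_{k(x)/k((t))}\type(x)$. So the whole task reduces to a local, node-by-node comparison after we have the additivity identity $\mu^{\A^1}_p(f) = \sum_{x\in Y^p_{\gen}} \mu^{\A^1}_x(f+tg)$ in hand.

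First I would unpack the étaleness hypothesis at the level of $\grad(f+tg)$. Since $Y^p_{\gen}$ is cut out by $\grad(f+tg)=0$ and the Jacobian of this $n$-tuple is precisely the Hessian determinant $\Hess(f+tg)$, étaleness of $Y^p_{\gen}\to\Spec k((t))$ at $x$ forces $\Hess(f+tg)(x)\neq 0$, which by definition says that $x$ lies on $\{f+tg=u(x)\}$ as a node. In particular, $k(x)/k((t))$ is finite separable and $x$ is a simple isolated zero of $\grad(f+tg)$ base-changed to $k(x)$.

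Next I would identify $\mu^{\A^1}_x(f+tg)$ with the local $\A^1$-degree of $\grad(f+tg)$ at $x$. The Scheja--Storch presentation constructed just before the theorem, combined with the Kass--Wickelgren identification of the EKL form with the local $\A^1$-degree and its extension to separable residue fields by Brazelton--Burklund--McKean--Montoro--Opie, gives $\mu^{\A^1}_x(f+tg) = \deg^{\A^1}_x(\grad(f+tg))$. By Example~\ref{example: local A1 degree separable field extension}, at a simple zero with separable residue field this local degree is a transfer, so $\mu^{\A^1}_x(f+tg) = \Tr_{k(x)/k((t))}\lra{\Hess(f+tg)(x)}$.

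Finally I would recognize $\lra{\Hess(f+tg)(x)}$ as $\type(x)$. After base change to $k(x)$ the node $x$ becomes a rational point of $\A^n_{k(x)}$ and a simple isolated zero of $\grad((f+tg)\otimes k(x))$; the formula for the local $\A^1$-degree at a rational simple zero in Section~\ref{subsection: local A1-degree} then gives $\type(x) = \mu^{\A^1}_x((f+tg)\otimes k(x)) = \lra{\Hess(f+tg)(x)}$. Substituting into Theorem~\ref{A1-Milnor_number_sum_A1-Milnor_numbers_bifurcations} yields the stated identity in $\GW(k((t)))$, and the image lies in $\GW(k)$ because the left-hand side does. The main obstacle is not conceptual but bookkeeping: one needs the Scheja--Storch pairing defined over $k[[t]]$ to specialize to $k((t))$ as the usual local $\A^1$-degree at each $x$, so that the definition of $\type(x)$ via $\mu^{\A^1}_x((f+tg)\otimes k(x))$ really does match the class of the Hessian. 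This is exactly what the Kass--Wickelgren/Brazelton et al.\ comparison supplies.
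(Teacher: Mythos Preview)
Your proposal is correct and follows essentially the same approach as the paper: apply Theorem~\ref{A1-Milnor_number_sum_A1-Milnor_numbers_bifurcations} and then identify each $\mu^{\A^1}_x(f+tg)$ with $\Tr_{k(x)/k((t))}\type(x)$ using the simple-zero formula of Example~\ref{example: local A1 degree separable field extension}. The paper's proof is terser---it invokes separability of nodes via SGA~7 rather than reading it off directly from the \'etaleness hypothesis as you do---but the logical structure is identical.
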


\begin{remark}
Recall that $Y^p$ was defined above to be the union of the components  $Y = \{ \grad (f + t g) = 0 \} \hookrightarrow  \A^n_{k[[t]]}$ containing $p$, and $Y^p_{\gen}$ denotes its generic fiber. Its points are the singularities of the $u$-family of deformations $\{ f+ tg = u\}$ that $p$ bifurcates into, and the assumption that  $Y^p_{\gen} \to \Spec k((t))$ be \'etale is equivalent to the statement that these singularities are all nodes. See Propositions \ref{Xeta_etale_g_after_linear_mod_under_separability} and \ref{Xeta_etale_generic_g} for conditions under which this occurs. 
\end{remark}

\begin{proof}
All the points of $Y^p_{\gen}$ are nodes because $Y^p_{\gen} \to \Spec k((t))$ is \'etale. By Theorem \ref{A1-Milnor_number_sum_A1-Milnor_numbers_bifurcations}, it thus suffices to show that $\Tr_{k(x)/k((t))} \type(x) = \mu^{\A^1}_x (f+ tg)$. This follows by the separability of the field extension $k((t)) \subseteq k(x)$ \cite[Expos\'e XV, Th\'eoreme 1.2.6]{MR0354657} and Example \ref{example: local A1 degree separable field extension}.
\end{proof}

\section{Enriched counts using Euler numbers}
\label{section: Examples of enriched counts}

\subsection{Enriched Euler number}
\label{subsection: Enriched Euler number}

A vector bundle $V$ on a smooth $k$-scheme $X$ is said to be {\em relatively oriented} by the data of a line bundle $L$ on $X$ and an isomorphism $L^{\otimes 2} \cong \Hom(\det TX, \det V).$

Kass and the second named author define an enriched Euler number of a relatively oriented vector bundle of rank $r$ on a smooth, proper $r$-dimensional scheme. This Euler number is an element of $\GW(k)$ and equals the sum of local $\A^1$-degrees at the isolated zeros of a general section \cite{2017arXiv170801175K}. It can be shown \cite{bachmann2020a1euler} to equal a pushfoward in oriented Chow groups of the Euler class of Barge and Morel \cite{MR1753295} \cite{MR2542148}. This was also studied by M. Levine in \cite{levine2017enumerative} where particular attention is given to the $\GW(k)$-valued Euler characteristic which is the Euler number of the tangent bundle.  

\subsubsection{Lines on a smooth cubic surface}
As an application Kass and the second named author get an enriched count of lines on a cubic surface as the Euler number of the vector bundle $\Sym^3\mathcal{S}^*\rightarrow \Gr(2,4)$.

Let $X\subset\PP^3_k$ be a smooth cubic surface. It is a classical result that $X_{\bar{k}}$ contains 27 lines.

\begin{definition}
Let $l$ be a line on $X$ defined over $k(l)$. Then the Gauss map sending $p\in l$ to its tangent space $T_pX$ in $X$ is a degree 2 map 
\[l\cong \PP^1\rightarrow \PP^1=\text{ lines in }\PP^3\text{ containing }l\]
and the non-trivial element of its Galois group is an involution of the line $l$. 
The fixed points of this involution are defined over $k(l)[\sqrt{D}]$ for some $D\in k(l)^{\times}/(k(l)^{\times})^2$. 
Define the \emph{type} of $l$ to be
\[\operatorname{Type}(l)=\langle D\rangle \in\GW(k(l)).\]
\end{definition}

\begin{theorem}[Kass-Wickelgren]
Assume $\operatorname{char} k\neq 2$ and $X$ a smooth cubic surface. Then 
\[\sum_{l \text{ line on }X}\Tr_{k(l)/k}\operatorname{Type}(l)=15\langle 1\rangle +12\langle -1\rangle \in \GW(k).\]
\end{theorem}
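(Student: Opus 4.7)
The plan is to realize the lines on $X$ as the zero locus of a canonical section of a rank $4$ vector bundle on the Grassmannian of lines in $\PP^3$ and then to compute the corresponding $\A^1$-Euler number. Let $\mathcal{S}\to\Gr(2,4)$ denote the tautological rank $2$ subbundle and set $V:=\Sym^3\mathcal{S}^*$, which has rank $4=\dim\Gr(2,4)$. The cubic form $f\in\Sym^3(k^4)^*$ defining $X$ determines a section $\sigma_f\in H^0(\Gr(2,4),V)$ whose value at a $2$-plane $W\subset k^4$ is $f|_W$; its zero locus is precisely the set of lines on $X$. Since $\det T\Gr(2,4)\cong(\det\mathcal{S}^*)^{4}$ and $\det V\cong(\det\mathcal{S}^*)^{6}$, the line bundle $\Hom(\det T\Gr(2,4),\det V)\cong(\det\mathcal{S}^*)^{2}$ is canonically a square, so $V$ is relatively oriented. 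Because $X$ is smooth, every line on $X_{\kbar}$ is a simple, isolated zero of $\sigma_f$, so the $\A^1$-Euler number formalism of Kass-Wickelgren gives
\[
 e^{\A^1}(V)=\sum_{l\text{ line on }X}\Tr_{k(l)/k}\deg^{\A^1}_l\sigma_f\in\GW(k).
\]

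Next I would identify each local $\A^1$-degree with the advertised type. Pick a basis $e_0,e_1$ of the $2$-plane corresponding to $l$ and complete to a basis $e_0,e_1,e_2,e_3$ of $k^4$; a nearby $2$-plane may be written as the span of $e_0+s_1 e_2+s_2 e_3$ and $e_1+s_3 e_2+s_4 e_3$, yielding Nisnevich coordinates $(s_1,s_2,s_3,s_4)$ on $\Gr(2,4)$ near $[l]$. Trivializing $V$ via the monomial basis $x_0^a x_1^b$ of $\Sym^3\mathcal{S}^*$, a direct expansion expresses $\sigma_f$ and its differential at $l$ in terms of the partial derivatives of $f$ in the normal directions to $l$. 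The resulting $4\times 4$ Jacobian factors, up to squares in $k(l)$, as the discriminant of the degree $2$ Gauss covering $l\to\PP^1$, $p\mapsto T_pX$. That discriminant is by definition the class $D$, so $\deg^{\A^1}_l\sigma_f=\langle D\rangle=\operatorname{Type}(l)$ in $\GW(k(l))$, where we use Example \ref{example: local A1 degree separable field extension} to reduce to the case when the line is $k$-rational via the transfer.

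Finally, I would evaluate the universal class $e^{\A^1}(V)\in\GW(k)$. Since it depends only on the relatively oriented bundle $V$ and not on the chosen smooth $X$, the computation reduces to a single convenient example. I would take a smooth cubic over $\QQ\subset\R$ with all $27$ lines rational (for example, the Schl\"afli cubic), compute each type by reading off the field of definition of the tangent cone of the Gauss involution, and then invoke Segre's classical theorem that on any real smooth cubic surface the number of hyperbolic (split) lines exceeds the number of elliptic (non-split) lines by exactly $3$. Together with the classical count of $27$ lines over $\C$, this forces $15$ hyperbolic contributions $\langle 1\rangle$ and $12$ elliptic contributions $\langle -1\rangle$, yielding $15\langle 1\rangle+12\langle -1\rangle$. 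Functoriality of $e^{\A^1}$ under base change then transports this equality to $\GW(k)$ for every field $k$ of characteristic $\neq 2$.

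The main obstacle is the local identification in the middle step: producing Nisnevich coordinates on $\Gr(2,4)$ together with a trivialization of $V$ which are compatible with the chosen relative orientation and explicit enough that the algebraic Jacobian computed via the EKL-form visibly coincides, modulo squares, with the discriminant of the geometric Gauss involution on $l$. The evaluation of the universal class is less delicate, as it is handled by Segre's classical real-geometric count plus invariance of $e^{\A^1}$ under base change.
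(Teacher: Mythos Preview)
The paper does not actually prove this theorem; it is stated without proof as a result quoted from \cite{2017arXiv170801175K}. Your outline is precisely the strategy of that original paper: realize the lines as the zero locus of the tautological section of $\Sym^3\mathcal{S}^*$ on $\Gr(2,4)$, check relative orientability, identify the local index at a simple zero with $\operatorname{Type}(l)$, and then evaluate the $\A^1$-Euler number once and for all.

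There is, however, a genuine gap in your final step. Base change for $e^{\A^1}$ goes the wrong direction for what you want: a field map $k\to k'$ induces $\GW(k)\to\GW(k')$, so a computation over $\R$ (or over $\QQ\subset\R$) only tells you the \emph{image} of $e^{\A^1}(V)$ in $\GW(\R)$, namely its rank $27$ and signature $3$. It does not determine the class in $\GW(k)$ for a general $k$, and it says nothing at all about fields of positive characteristic, which admit no homomorphism to $\R$. Even over $\QQ$ an element of $\GW(\QQ)$ is far from determined by rank and signature, and for a concrete cubic with $27$ rational lines the individual types $\langle D_i\rangle$ are various classes in $\QQ^\times/(\QQ^\times)^2$, not merely $\pm 1$; you cannot read off $15\langle 1\rangle+12\langle -1\rangle$ from Segre's sign count alone.

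What is missing is an argument, valid over every $k$ with $\operatorname{char}k\neq 2$, that $e^{\A^1}(\Sym^3\mathcal{S}^*)$ lies in the subring generated by $\langle 1\rangle$ and $\langle -1\rangle$. In \cite{2017arXiv170801175K} this is handled by working over a fixed base and computing the local indices for an explicit cubic whose lines and Gauss data are completely known (so that each local contribution is visibly $\langle 1\rangle$ or $\langle -1\rangle$ over that base), together with the invariance of the Euler number under change of section. Once you have such a computation producing $15\langle 1\rangle+12\langle -1\rangle$ over the prime field (separately in characteristic $0$ and in each odd characteristic, or via a uniform integral model), base change then pushes it forward to every $k$. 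Your Segre/real-signature shortcut only supplies the two integers $15$ and $12$ \emph{after} you already know the class has this special form.
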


\subsubsection{More enriched Euler numbers}
The enriched Euler number has been used to obtain several more enrichted counts:
In \cite{pauli2020quadratic} the first named author defines the \emph{type} of a line on a quintic threefold and uses a dynamic intersection approach to compute an enriched count of lines on a quintic threefold. We saw a similar dynamic approach when we discussed enriched Milnor numbers. 

The Euler numbers corresponding to counts of lines on generic hypersurfaces of degree $2n-1$ in $\PP^{n+1}$ was computed in \cite{LEVINE_2019}. The Euler numbers corresponding to counts of $d$-planes on generic complete intersections was computed in \cite{bachmann2020a1euler}.

Wendt developes a Schubert calculus and computes Euler numbers in \cite{MR4071217}. In \cite{srinivasan2018arithmetic} Srinivasan and the second named author give an enriched count of lines meeting 4 general lines in $\PP^3$. 
 
Larson and Vogt count the bitangents to a smooth plane quartic curve \cite{larson2019enriched}. The relevant vector bundle is not relatively orientable. They introduce notion of \emph{relative orientability relative to a divisor} and show that the count of bitangents to a smooth plane quartic curve relative to a 'fixed line at infinity' is $16\langle1\rangle+12\langle-1\rangle$.

McKean proves an enriched version of B\'{e}zout's theorem in \cite{McKean}. 

The first named author computes several enriched Euler numbers in \cite{pauli2020computing} using Macaulay2.

\section{$\A^1$-degree of maps of smooth schemes}
\label{section: degree of maps btw schemes}
The content of the following sections is ongoing work by Jesse Kass, Marc Levine, Jake Solomon and the second named author.
\subsection{Motivation from Algebraic Topology}
\label{subsection: motivation for degree of maps}
Let $f:X\rightarrow Y$ be a map of compact, oriented $n$-manifolds without boundary with $Y$ connected.
Algebraic topology defines the \emph{degree} of $f$ (\cite[Chapter 5]{MR0226651}) to be $f_*[X]=\deg f \cdot [Y]$. This degree can again be expressed as the sum of local degrees 
\[\deg f=\sum_{q\in f^{-1}(p)}\deg_qf\]
where $\deg_qf$ is defined in the same way as before \eqref{equation: def jacobian},
that is, for oriented coordinates $x_1,\dots,x_n$ of $X$, the map $f$ is locally given by $f=(f_1,\dots,f_n):\R^n\rightarrow \R^n$ and
\[
\deg_q(f)=\begin{cases*}
      +1 & if Jf(q) \text{$>$} 0 \\
      -1        & if Jf(q) \text{$<$} 0
    \end{cases*}
\]
for $Jf=\det\frac{\partial
f_i}{\partial x_j}$.
\subsection{$\A^1$-degree}
\label{subsection: A1-degree of maps}
We want to construct a $\GW(k)$-valued degree for a map $f:X\rightarrow Y$ of smooth, proper $k$-schemes as a sum of local degrees
\[\deg^{\A^1} f:=\sum_{q\in f^{-1}(p)}\deg^{\A^1}_qf.\]
In order to do this we need to answer the following questions.
\begin{enumerate}
\item What is $\deg^{\A^1}_qf$?
\item What orientation data do we need?
\item When do we have finite fibers?
\item Is $\deg^{\A^1} f$ independent of $p$?
\end{enumerate}
Here is one answer to the third question: If the differential $Tf:TX\rightarrow TY$ is invertible at some point, we can arrange to have finite fibers away from a codimension 2 subscheme of $Y$. We will be able to content ourselves with throwing away codimension $2$ subsets of $Y$ because $\GW$ extends to an unramified sheaf \cite{MR1670591} \cite{MR2934577}, meaning a section of $\GW$ over the complement of a codimension $2$ subset extends to a section over $Y$. 

Assume that $p$ is a $k$-point and $q\in f^{-1}(p)$. When $k(q)$ is separable over $k$, we can assume that $k=k(q)$ (otherwise we base change to $k(q)$ and take the trace, see Example \ref{example: local A1 degree separable field extension}). We want to define the local degree $\deg^{\A^1}_qf$ as before, that is as the $\A^1$-degree of
\begin{align}
\label{eq: local degree of a map}
\begin{split}
\PP^n_{k}/\PP^{n-1}_{k}\simeq T_qX/(T_qX-\{0\})\simeq U/(U-\{q\})\\
\xrightarrow{\bar{f}}Y/(Y- \{p\})\simeq T_pY/(T_pY-\{0\})\simeq \PP^n_{k}/\PP^{n-1}_{k}.
\end{split}
\end{align}

To make this well-defined, we need orientation data to fix the isomorphisms in \eqref{eq: local degree of a map}.

Let $Tf:TX\rightarrow TY$ be the induced map on tangent bundles which is an element of $\operatorname{Hom}(TX,f^*TY)(X)$. Hence, its determinant $Jf:=\det Tf$ is an element of $Jf\in \operatorname{Hom}(\det TX,\det f^*TY)(X)$. To define $\langle Jf(q)\rangle\in \GW(k(q))$, we only need $Jf(q)$ to be well-defined up to a square, that is, we need $Jf(q)$ to be well-defined in $k(q)^{\times}/(k(q)^{\times})^2$. So if we can identify $Jf$ as a section of a square of a line bundle, we are good.

\begin{definition}
The map $f:X\rightarrow Y$ is \emph{relatively oriented} by the data of a line bundle $L$ on $X$ and an isomorphism $L^{\otimes2}\cong\operatorname{Hom}(\det TX,f^*\det TY)$.
\end{definition}

\begin{remark}

For $f$ relatively oriented, we have $Jf\in L_q^{\otimes 2}$, so $Jf(q)\in k(q)^{\times}/(k(q)^{\times})^2$ and $Jf_q\in \glob_{X,q}^{\times}/(\glob_{X,q}^{\times})^2$. Thus if $Jf(q)\neq 0$, then
\[\deg^{\A^1}_qf=\Tr_{k(q)/k}\langle Jf(q)\rangle \in \GW(k).\]
\end{remark}
\begin{definition}
Bases of $T_pY$ and $T_qX$ are \emph{compatible} if the corresponding element of the fiber $\operatorname{Hom}(\det TX,f^*\det TY)(q)$ is a square $l(q)\otimes l(q)$ for some $l\in L_q$.
\end{definition}

Requiring compatible bases makes the degree of \eqref{eq: local degree of a map} well-defined: Two different choices of compatible bases of $T_qX$ and $T_pY$ correspond to two elements $l$ and $l'$ of $L_q$ such that the corresponding elements of the fiber $\operatorname{Hom}(\det TX,f^*\det TY)(q)$ equal to $l(q)\otimes l(q)$ and $l'(q)\otimes l'(q)$, respectively, and thus differ by a square, i.e., are equal in $k(q)^{\times}/(k(q)^{\times})^2$. 
So we have a definition for $\deg^{\A^1}_qf$ (this answers question 1) given a relative orientation of $f$ (this answers question 2). In fact, we may even content ourselves with a relative orientation of the restriction of $f$ to the inverse image of the complement of a closed subset of $Y$ of codimension at least $2$.

It remains to see when the degree of a map is independent of the choice of $p$ (question 4). 
\begin{example}
The degree of a map is not necessarily independent of $p$. Let $C$ be the elliptic curve $C = \C/\Z[i]$. Then $C$ has two components of real points, given by the points of $\C$ with imaginary component $0$ and with imaginary component $1/2$. The map $C\xrightarrow{\times 2}C$ has different degrees over the different real components.
\end{example}

However, whenever two points can be connected by an $\A^1$, the local degree at those points are equal because of Harder's theorem.

\begin{theorem}[Harder's theorem]
Families of bilinear forms over $\A^1$ are stably constant (see \cite[Theorem 3.13, Chapter VII]{MR2235330} and \cite[Lemma 30]{MR3909901}).
\end{theorem}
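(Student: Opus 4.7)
\emph{Proof proposal.} The statement is equivalent to the assertion that the ring homomorphism $\GW(k) \to \GW(k[t])$ induced by the inclusion $k \hookrightarrow k[t]$ is an isomorphism. Since evaluation at $t = 0$ gives a retraction, injectivity is automatic; the content is surjectivity. My plan is to reduce to a concrete matrix manipulation and then induct on rank.

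First I would reduce to free modules: since $k[t]$ is a principal ideal domain, every finitely generated projective $k[t]$-module is free, so a non-degenerate symmetric bilinear form $b$ of rank $n$ over $k[t]$ is given by a symmetric matrix $B(t) \in M_n(k[t])$ whose determinant $\det B(t) \in k[t]^\times = k^\times$ is a nonzero constant. It then suffices to show that for some $N \geq 0$ there is an invertible change of basis $M(t) \in \operatorname{GL}_{n+2N}(k[t])$ transforming $B(t) \oplus H^{\oplus N}$ into the constant matrix $B(0) \oplus H^{\oplus N}$, where $H$ denotes the hyperbolic plane.

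The inductive step is to show that, after stabilization by one copy of $H$, there exists a vector $v \in k[t]^{n+2}$ whose norm $b(v,v)$ is a nonzero constant in $k^\times$. Given such a $v$, the rank-$1$ subform $\langle b(v,v) \rangle$ splits off orthogonally, and one iterates on the orthogonal complement. To construct $v$, I would write $v = (w, x, y)$ in coordinates compatible with the hyperbolic summand, so that $b(v,v) = w^T B(t) w + 2xy$, and then perform a Euclidean-style manipulation on the polynomial $w^T B(t) w$ by choosing $w$ judiciously and using $x, y$ to absorb the non-constant terms and land on a constant in $k^\times$.

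The main obstacle is precisely this construction of a constant-norm vector, which is classical in characteristic not $2$ and appears as Harder's original argument in \cite[Chapter VII, Theorem 3.13]{MR2235330}; the characteristic-$2$ case requires working with symmetric bilinear forms directly (without the polarization identity) and is treated in \cite[Lemma 30]{MR3909901}. An alternative route in characteristic not $2$ is to invoke Milnor's exact sequence for $W(k(t))$: a form defined over $k[t]$ has trivial second residue at every closed point of $\A^1_k$, so its image in $W(k(t))$ lies in the kernel of all residue homomorphisms, which Milnor identifies with $W(k)$; combined with the rank map this upgrades to the desired isomorphism $\GW(k[t]) \cong \GW(k)$, from which the stable constancy of any family of bilinear forms over $\A^1_k$ follows immediately.
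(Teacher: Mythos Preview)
The paper does not actually give a proof of this theorem: it is stated as a named result with citations to \cite[Theorem 3.13, Chapter VII]{MR2235330} and \cite[Lemma 30]{MR3909901}, and then used as a black box. Your sketch is essentially an outline of the argument contained in those references (the constant-norm-vector induction is Harder's original approach as presented in Lam, and the characteristic-$2$ case is exactly what \cite[Lemma 30]{MR3909901} handles), so there is nothing to compare against in the paper itself.
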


We recall the definition of $\A^1$-chain connectedness from \cite{MR2803793}.
\begin{definition}
A $k$-scheme $Y$  is \emph{$\A^1$-chain connected} if for any finitely generated separable field extension $L/k$ and any two $L$-points $x,y\in Y(L)$ there are $x=x_0,x_1\dots,x_{n-1},x_n=y\in Y(L)$ and $\gamma_i:\A^1_L\rightarrow Y$ with $\gamma_i(0)=x_{i-1}$ and $\gamma_i(1)=x_i$ for $i=1,\dots,n$.
\end{definition}
In other words, a $k$-scheme is $\A^1$-chain connected if any two $L$-points can be connected by chain of maps from $\A^1_L$.
\begin{theorem}
\label{thm: deg well defined}

Let $f:X\rightarrow Y$ be a proper map of smooth $d$-dimensional $k$-schemes, such that $Tf$ is invertible at some point. Assume further that $f$ is relatively orientable after removing a codimension 2 subset of $Y$ and that $Y$ is $\A^1$-chain connected with a $k$-point $y$. Then \[\sum_{x\in f^{-1}(y)}\deg^{\A^1}_xf\in \GW(k)\]
is independent of a generically chosen point $y$.
\end{theorem}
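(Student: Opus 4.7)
The plan is to assemble the fiberwise sum $\sigma(y):=\sum_{x\in f^{-1}(y)}\deg^{\A^1}_x f$ into a single global section of the unramified sheaf $\GW$ on $Y$, then use $\A^1$-chain connectedness together with Harder's theorem to show that this section is constant on $k$-points.

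First I would produce an open $U\subseteq Y$ whose complement has codimension at least $2$ in $Y$, such that $f|_{f^{-1}(U)}\colon f^{-1}(U)\to U$ is finite with everywhere nonzero Jacobian element, and such that $f$ is relatively oriented over $U$. The existence of $U$ uses the paragraph preceding the theorem: invertibility of $Tf$ at one point together with properness and upper semicontinuity of fiber dimension restricts positive-dimensional fibers to a codimension-$\geq 2$ locus of $Y$, and by hypothesis the relative orientation exists after removing such a locus.

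Second, I would show that the assignment $y\mapsto \sigma(y)$ is the fiberwise value of a single section $\sigma_U\in \GW(U)$. Since $f|_{f^{-1}(U)}$ is finite and flat, the pushforward $f_*\calO_{f^{-1}(U)}$ is a locally free $\calO_U$-algebra, and a Scheja--Storch construction in this relative setting, analogous to the one in Section \ref{subsection: dynamic interpretation} with $\calO_U$ in place of $k[[t]]$, produces a nondegenerate symmetric $\calO_U$-bilinear form on this algebra. The relative orientation is what makes this form genuinely $\calO_U$-valued rather than twisted by a line bundle. At each closed point $y\in U$, the restriction of this form is the sum over $x\in f^{-1}(y)$ of the local EKL forms, which by the Kass--Wickelgren theorem compute $\deg^{\A^1}_x f$. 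Since $\GW$ is unramified on smooth schemes \cite{MR1670591}\cite{MR2934577} and $Y\setminus U$ has codimension $\geq 2$ in the smooth $Y$, the section $\sigma_U$ extends uniquely to $\sigma\in \GW(Y)$.

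Finally, I would appeal to Harder's theorem to show $\sigma$ is constant on $L$-points for every separable finitely generated extension $L/k$. Given $y_0,y_1\in Y(L)$ connected by an $\A^1$-chain $\gamma_1,\ldots,\gamma_n$, each pullback $\gamma_i^*\sigma$ lies in $\GW(\A^1_L)\cong \GW(L)$ and is therefore invariant under evaluation at any $L$-point of $\A^1_L$. Chaining the equalities $\sigma(\gamma_i(0))=\sigma(\gamma_i(1))$ gives $\sigma(y_0)=\sigma(y_1)$. In particular $\sigma$ takes a single value at all generic $k$-points of $Y$, which is the claim.

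The main obstacle is the second step: producing the honest global $\calO_U$-valued bilinear form on $f_*\calO_{f^{-1}(U)}$ that recovers the pointwise sum of local $\A^1$-degrees. Making this work requires a relative version of the EKL/Scheja--Storch comparison between local degree and bilinear form, with the relative orientation playing the same role as the choice of compatible bases did in the definition of $\deg^{\A^1}_q f$, and some care is needed to verify that the resulting section of $\GW$ on $U$ matches the unramified sheaf structure so that $\sigma_U$ extends to the desired global $\sigma$ on $Y$.
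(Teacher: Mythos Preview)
The paper does not give a self-contained proof of this theorem; it is stated as part of the ongoing Kass--Levine--Solomon--Wickelgren work announced at the start of Section~\ref{section: degree of maps btw schemes}. What the paper does provide are the two key ingredients scattered in the surrounding discussion: the remark that ``$\GW$ extends to an unramified sheaf \ldots\ meaning a section of $\GW$ over the complement of a codimension $2$ subset extends to a section over $Y$,'' and the sentence ``whenever two points can be connected by an $\A^1$, the local degree at those points are equal because of Harder's theorem,'' followed by the statement of Harder's theorem and the definition of $\A^1$-chain connectedness. Your three-step outline---build a global section over a codimension-$2$ complement via a relative Scheja--Storch form, extend via unramifiedness, then propagate along $\A^1$-chains using Harder---is exactly the argument these hints point to, so your approach matches the paper's.

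Your own caveat in the final paragraph is well placed: the genuine work, which the paper defers to the forthcoming Kass--Levine--Solomon--Wickelgren paper, is precisely the construction of the $\calO_U$-valued form on $f_*\calO_{f^{-1}(U)}$ and the verification that the relative orientation untwists it correctly so that its class in $\GW(U)$ specializes to $\sum_{x\in f^{-1}(y)}\deg^{\A^1}_x f$ at each $y\in U$. One small point to tighten: your chain argument pulls $\sigma$ back along $\gamma_i:\A^1_L\to Y$, and for this you need $\sigma$ already extended to all of $Y$ (not just $U$), since the $\gamma_i$ need not avoid the bad locus; you do this, but it is worth saying explicitly that this is why the unramified extension step must precede the Harder step.
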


\begin{definition}
With the assumption in Theorem \ref{thm: deg well defined} we define the \emph{degree} of $f:X\rightarrow Y$ to be equal to
\[\deg^{\A^1} f:=\sum_{x\in f^{-1}(y)}\deg^{\A^1}_xf.\]
\end{definition}

$f$ is generically finite and \'etale by assumption. It follows that $Jf(\text{generic pt}) \neq 0$, and 

\begin{corollary}
$\deg^{\A^1} f = \Tr_{k(X)/k(Y)}\langle Jf(\text{generic pt})\rangle$. 
\end{corollary}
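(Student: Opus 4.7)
The plan is to combine Example \ref{example: local A1 degree separable field extension}, which gives the local $\A^1$-degree at a separable étale point as the transfer of $\langle Jf\rangle$, with the observation that $f$ is generically étale under our hypotheses.

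First, I would argue that $\{Jf\neq 0\}\subseteq X$ is nonempty, hence dense open, since $Tf$ is invertible at some point by assumption. Consequently $f$ is étale over a dense open $U\subseteq Y$, the extension $k(X)/k(Y)$ (read componentwise on $X$ if $X$ is reducible) is finite separable, and $\langle Jf(\eta_X)\rangle$ is a well-defined class in $\GW(k(X))$ by virtue of the relative orientation lifting $Jf$ to a section of the square of a line bundle. In particular $\Tr_{k(X)/k(Y)}\langle Jf(\eta_X)\rangle$ is a well-defined element of $\GW(k(Y))$.

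Second, I would pick a sufficiently generic $k$-point $y\in U(k)$ so that every $x\in f^{-1}(y)$ has $k(x)/k$ finite separable with $Jf(x)\neq 0$. Example \ref{example: local A1 degree separable field extension}, applied at each $x$, then yields
\[\deg^{\A^1} f \;=\; \sum_{x\in f^{-1}(y)} \Tr_{k(x)/k}\langle Jf(x)\rangle.\]

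Finally, I would identify the right-hand side with $\Tr_{k(X)/k(Y)}\langle Jf(\eta_X)\rangle$. The pushforward $A := f_*\mathcal{O}_{f^{-1}(U)}$ is a finite étale $\mathcal{O}_U$-algebra, and the relative orientation produces a global class $\langle Jf\rangle\in A^\times/(A^\times)^2$. The twisted trace form $(a,b)\mapsto \Tr_{A/\mathcal{O}_U}(Jf\cdot ab)$ defines a single nondegenerate symmetric bilinear family over $U$, whose specialization at $y$ is the sum displayed above and whose specialization at $\eta_Y$ is $\Tr_{k(X)/k(Y)}\langle Jf(\eta_X)\rangle$. By the unramifiedness of $\GW$ and the independence statement of Theorem \ref{thm: deg well defined}, these specializations agree in $\GW(k(Y))$, where the equality of the corollary is to be read via the natural inclusion $\GW(k)\hookrightarrow \GW(k(Y))$. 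The main subtlety is checking that the twisted trace form of $A/\mathcal{O}_U$ is compatible both with base change along $\Spec k\to U$ at the point $y$ and along $\Spec k(Y)\to U$ at the generic point; this is a standard property of étale trace forms, and combined with Harder's theorem gives the desired equality.
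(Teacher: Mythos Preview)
The paper does not give a proof of this corollary: it is stated immediately after the definition of $\deg^{\A^1} f$, preceded only by the one-line observation that $f$ is generically finite \'etale so that $Jf(\text{generic pt})\neq 0$, and followed by a remark that the right-hand side, a priori in $\GW(k(Y))$, is in fact in the image of $\GW(k)$. Your argument therefore supplies details the paper leaves implicit, and the overall strategy---packaging the local degrees into the twisted trace form $\Tr_{A/\mathcal{O}_U}(Jf\cdot{-})$ on the finite \'etale algebra $A=f_*\mathcal{O}_{f^{-1}(U)}$ and then comparing its specializations at $y$ and at $\eta_Y$---is correct and is exactly the kind of argument the paper has in mind.

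One point deserves sharpening. In your last step you invoke ``unramifiedness of $\GW$'' together with Theorem~\ref{thm: deg well defined} and Harder's theorem to conclude that the two specializations agree. Unramifiedness (injectivity of $\GW(U)\to\GW(k(Y))$ and extension across codimension~$2$) does not by itself compare the value at a closed point with the value at the generic point; a nonconstant class in $\GW(U)$ would have different such specializations. What actually forces the agreement is the $\A^1$-chain connectedness hypothesis applied with $L=k(Y)$: the generic point $\eta_Y$ is a $k(Y)$-point of $Y$, and the base change $y_{k(Y)}$ of your chosen $k$-point is another $k(Y)$-point; since $k(Y)/k$ is finitely generated separable (as $Y$ is smooth), the definition of $\A^1$-chain connectedness connects them by $\A^1_{k(Y)}$'s, and Harder's theorem then forces the twisted trace form to take the same value at both. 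This is the genuine content behind the independence statement of Theorem~\ref{thm: deg well defined}, and once phrased this way your invocation of that theorem is exactly right. The injectivity of $\GW(k)\hookrightarrow\GW(k(Y))$ that you use at the end does hold (via $\GW(k)\hookrightarrow\GW(\mathcal{O}_{Y,y})\hookrightarrow\GW(k(Y))$, the second map being injective for a regular local ring), so the equality is meaningful as stated.
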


Note that priori, $ \Tr_{k(X)/k(Y)}\langle Jf(\text{generic pt})\rangle$ is in $\GW(k(Y))$. It is a consequence of the theory that it in fact is the image of a well-defined element of $\GW(k)$.

\begin{example}
Let $C=\{(z,y):y^2=p(z)\}$ be an elliptic curve and let 
$\pi:C\rightarrow \PP^1$ be defined by $(z,y)\mapsto z$. The curve $C$ is oriented by $TC^*\simeq \glob$ with $\frac{dz}{2y}$ corresponds to $1$ and $(T\PP^1)^*\simeq \glob(-1)^{\otimes 2}$ where $dz$ is a square. Since $\pi^* (dz)=2y\frac{dz}{2y}$ we have that $Jf(\text{generic point of }C)=2y$ and thus
\begin{align*}
\deg^{\A^1} \pi=\Tr_{k(C)/k(z)}\langle 2y\rangle 
\end{align*}
which is equal to the form given by the matrix
\[
\begin{bmatrix}
\Tr_{k(C)/k(z)}(2y) & \Tr_{k(C)/k(z)}(2)\\
\Tr_{k(C)/k(z)}(2) & \Tr_{k(C)/k(z)}(2/y)
\end{bmatrix} 
=
\begin{bmatrix}
0 & 4\\
4 & 0
\end{bmatrix} 
\]
which is the hyperbolic form $\langle1\rangle+\langle-1\rangle$.
\end{example}
\section{Counting rational curves}

\label{subsection: counting rational curves}

It is an ancient observation that there is one line passing through two points in the plane. Similarly, given 5 points, there is one conic passing through them. These generalize to the question: how many degree $d$ rational plane curves are there passing through a generic choice of $3d-1$ points? Over an algebraically closed field, a {\em degree $d$ rational curve} means a map $$ u:\PP^1\rightarrow \PP^2,$$ $$t\mapsto [u_0(t),u_1(t),u_2(t)]$$  where the $u_i$ are polynomials of degree $d$, and more generally the domain of $u$ can be a genus $0$ curve. Over the complex numbers, the number of such curves passing through $3d-1$ points does not depend on the generic choice of the points themselves. For some low values of $d$, the answers $N_d$ are listed in Table \ref{tab:1} \cite[p.1]{MR1722053}. For $d=3$, $N_d$ was known to Steiner in 1848. For $d=4$, Zeuthen computed $N_d$ in 1873, but it was not until the 1980's that $N_5$ was computed. Then around 1994, Kontsevich computed a recursive formula for all $N_d$ with a breakthrough connection to string theory.

\begin{table}
\caption{Counting rational curves}
\label{tab:1}       
\begin{tabular}{lll}
\hline\noalign{\smallskip}
 $d$& $3d-1$ & $N_d=$ number of rational curves \\
\noalign{\smallskip}\hline\noalign{\smallskip}
 1& 2 & 1  \\
 2& 5 & 1  \\
 3& 8 & 12 \\
 4& 11 & 620\\
 5& 14 & 87,304\\
 $\dots$&$\dots$&$\dots$\\
\noalign{\smallskip}\hline
\end{tabular}
\end{table}

If we wish to count {\em real} degree $d$ rational curves passing through $3d-1$ points, we should assume that the set of points is permuted by complex conjugation. Even then, the number of such curves can depend on the chosen points. For example, there can be $8$,$10$, or $12$ real degree $3$ rational curves through $8$ real points. Welschinger recovers ``invariance of number" by counting each curve with a $+1$ or $-1$ instead of counting all curves as adding $+1$ to the total count. 

The Welschinger sign is given as follows. A smooth degree $d$ plane curve has genus ${d -1 \choose 2}$, and it follows that a degree $d$ rational curve has ${d -1 \choose 2}$ nodes in its image. Assign the {\em mass} $1$ to the non-split node, $-1$ to the split node, and ignore the complex conjugate pairs of nodes. See the figure in Section \ref{subsection: A1 Milnor numbers}. Define the mass $m(u)$ to be the sum of the masses of the nodes in the image curve. Then the rational curve $u$ is counted with sign $(-1)^{m(u)}$.

\begin{theorem}
(Welschinger) Fix positive integers $d$, $n_1$ and $n_2$ such that $n_1+2n_2 = 3d-1$. For any generic choice of $n_1$ real and $n_2$ complex conjugate pairs of points in $\PP^2(\C)$, the sum $$W_{d,n_2} = \sum_{\substack{u \text{ degree } d \\ \text{ real rational curve} \\\text{ through the points}}} \prod_{p \text{ node}\text{ of }u}  (-1)^{m(p)}$$ is independent of the choice of points.
\end{theorem}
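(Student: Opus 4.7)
The plan is to interpret $W_{d,n_2}$ as the signature of an $\A^1$-degree of a real evaluation map, following the program of Section \ref{section: degree of maps btw schemes}. Let $\overline{\M} = \overline{\M}_{0, 3d-1}(\PP^2, d)$ denote the Kontsevich moduli space of stable maps from marked genus-$0$ curves, equipped with the real structure that pairs the last $n_2$ pairs of marked points via complex conjugation. The evaluation map $\ev: \overline{\M} \to (\PP^2)^{3d-1}$ is generically finite between spaces of complex dimension $6d-2$; restricting to real fixed loci gives a proper, generically \'etale map $\ev^{\R}: \overline{\M}^{\R} \to \mathcal{C}_{n_1, n_2}$ of real manifolds of dimension $6d-2$. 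Above a generic point of $\mathcal{C}_{n_1, n_2}$, the fiber is a finite set of real rational curves with nodal images, and $W_{d, n_2}$ is the Welschinger-signed count of these fibers.

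I would prove invariance of $W_{d, n_2}$ by reducing it to (a real version of) Theorem \ref{thm: deg well defined} after verifying its hypotheses for $\ev^{\R}$. The key steps are as follows. First, identify a relative orientation of $\ev^{\R}$ away from a codimension-$2$ locus, which amounts to comparing the relative dualizing sheaf of the universal family with pullbacks of $T\PP^2$. Second, show that each connected component of $\mathcal{C}_{n_1, n_2}$ is $\A^1$-chain connected in an appropriate real sense by interpolating configurations along explicit real paths. Third, compute the local $\A^1$-degree at a real rational nodal curve $u$ in a fiber and verify via a Jacobian calculation that its signature in $\GW(\R) \cong \Z \times \Z$ equals $\prod_p (-1)^{m(p)}$. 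Together these would give invariance of the signed count under generic changes of the configuration.

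The main obstacle is the wall-crossing analysis needed to extend the local signature computation to non-generic configurations. The codimension-$1$ walls in $\mathcal{C}_{n_1, n_2}$ correspond to fibers developing a cusp, a tacnode, a triple point, or a reducible component. The cusp wall is exactly what Theorem \ref{A1-Milnor_number_sum_node-types} is designed for: the $\A^1$-Milnor number of a cusp is the hyperbolic form $\lra{1} + \lra{-1}$, so over $\R$ the cusp must bifurcate into either a split-plus-non-split pair of real nodes (with types $\lra{-1}$ and $\lra{1}$ summing to hyperbolic) or a complex-conjugate pair of nodes (whose transfer $\Tr_{\C/\R}\lra{c} = \lra{1}+\lra{-1}$ is again hyperbolic). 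In the former case, the Welschinger contribution of the two new nodes is $(-1)^{-1}(-1)^{1} = +1$; in the latter case, the conjugate pair is ignored and contributes $+1$ trivially. Hence the Welschinger sign of the bifurcating curve, and so the total signed count, is preserved across the cusp wall.

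The tacnode, triple-point, and reducibility walls require their own enriched bifurcation identities, which I expect to follow from analogous $\A^1$-Milnor-type computations (for example, one needs to show that the $\A^1$-invariant of an $A_3$-singularity constrains the types of the nodes it bifurcates into in a way compatible with the Welschinger sign). The hardest part of the proof is handling the reducibility walls, since a reducible curve of bidegree $(d_1, d_2)$ with $d_1 + d_2 = d$ lives on a boundary component of $\overline{\M}$ where the local structure of $\ev^{\R}$ is not directly described by a single Jacobian; an inductive argument on $d$ and careful comparison with the signed counts at lower degree are likely required. A unified, obstruction-free treatment would ideally come from a global $\A^1$-degree of $\ev^{\R}$ in the sense of the program of Kass--Levine--Solomon--Wickelgren, which would subsume all the walls into one invariance statement rather than requiring case-by-case verification.
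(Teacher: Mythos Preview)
The paper does not prove this theorem; it is stated as Welschinger's result and cited without proof. Welschinger's original argument is symplectic, using moduli of real pseudoholomorphic curves. So there is no ``paper's own proof'' to compare against.

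Your proposal is internally inconsistent. You first say you will reduce to Theorem~\ref{thm: deg well defined}, which is exactly the right move: the later Kass--Levine--Solomon--Wickelgren theorem, specialized to $k=\R$ and composed with the signature $\GW(\R)\to\Z$, \emph{is} a proof of Welschinger's theorem. But if Theorem~\ref{thm: deg well defined} applies, invariance follows immediately from $\A^1$-chain connectedness of the target and Harder's theorem---no wall-crossing is needed. The second half of your proposal, analyzing cusp, tacnode, triple-point, and reducibility walls case by case, is either redundant or an admission that you have not verified the hypotheses of Theorem~\ref{thm: deg well defined}. You should commit to one strategy.

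Your cusp-wall analysis also misapplies Theorem~\ref{A1-Milnor_number_sum_node-types}. That theorem concerns the family $\{f+tg=u\}$ over the $u$-line: the cusp on $\{f=0\}$ is a critical point of $f$ of Milnor number $2$, and under perturbation it splits into two critical points, i.e.\ two nodes lying on \emph{different} fibers of the $u$-family. This is not the same as deforming a single cuspidal plane curve inside the linear system of degree-$d$ curves, where a cusp (with $\delta$-invariant $1$) deforms to a \emph{single} node, not two. So the claim that ``the cusp must bifurcate into a split-plus-non-split pair of real nodes or a complex-conjugate pair'' is not what happens along a cusp wall in $\mathcal{C}_{n_1,n_2}$; what actually happens there is that two real curves in the fiber of $\ev$ collide and can become a complex-conjugate pair, and the invariance statement concerns the sum over the whole fiber, not the node structure of a fixed curve.
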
 

For small values of $d$ and $n_2$, the values $W_{d,n_2}$ are given in Table \ref{realtab:1}, which is from \cite{MR2138469}.

\begin{table}
\caption{Counting real rational curves with Welschinger signs}
\label{realtab:1}       
\begin{tabular}{lll}
\hline\noalign{\smallskip}
 $d$& $n_2$ & $W_{d,n_2}=$ signed count of real rational curves \\
\noalign{\smallskip}\hline\noalign{\smallskip}
 1& $n_2$ & 1  \\
 2& $n_2$ & 1  \\
 3& $n_2$ & $8- 2 n_2$\\
 4& 0 &240\\
 4& 1 & 144\\
  4& 2 & 80\\
   4& 3 &40\\
    4& 4 & 16\\
      4& 5 & 0\\
 $\dots$&$\dots$&$\dots$\\
\noalign{\smallskip}\hline
\end{tabular}
\end{table}

Jake Solomon's thesis computes all of the $W_{d,n_2}$ recursively \cite{MR2717339} as the degree of a certain map. 

We want to do this over an arbitrary field $k$. For example, what about counting rational curves over $k = \FF_p,$ $\QQ_p$, or $\QQ$? 
\begin{definition}
A genus $g$, $n$-marked \emph{stable map} to $\PP^2$ consists of the data $(u:C\rightarrow \PP^2,p_1,\dots,p_n)$ where $C$ is a genus $g$ curve, $p_1,\dots,p_n\in C$ are smooth closed points of $C$ and $u$ is a morphism with only finitely many automorphisms.
Denote by
\begin{align*}\overline{\mathcal{M}_{\PP^2,n}}(0,d):=\{(u:C\rightarrow \PP^2, p_1,\dots,p_n): C \text{ rational, degree }d\text{ curve, }\\u \text{ stable and }p_i\in C\text{ smooth points}\}\end{align*}
 the \emph{Kontsevich moduli space}, that is the moduli space of genus $0$, degree $d$, $n$-marked stable maps into $\PP^2$.
\end{definition}
Consider the evaluation map
\[\operatorname{ev}:\overline{\mathcal{M}_{\PP^2,n}}(0,d)\rightarrow (\PP^2)^n, u\mapsto [u(p_1),\dots,u(p_n)].\] Its fiber over a $k$-point is precisely those rational curves through the $k$-points $p_1,\ldots,p_n$. So if we are capable of defining the degree of $\ev$ we will obtain a weighted count in $\GW(k)$ which does not depend on the choice of points, as long as they are chosen generically. We want to allow points to move in Galois conjugate orbits as in Welschinger's theorem. Fix a Galois action on the points by $\sigma:\operatorname{Gal}(\bar{k}/k)\rightarrow \Sigma_n$. This is equivalent to choosing the residue fields of the points. For example, over $\QQ$ for $d \geq 4$, we could choose a conjugate pair of points over $\QQ(\sqrt{2})$, six points in a single orbit defined over the splitting field of $x^3 - 7$ and the rest $\QQ$ points. Given $\sigma$, the set of these residue fields is given by \begin{equation}\label{resfieldssigma}\{\kbar^{\operatorname{stab}{o}}: o \text{ orbit of }\sigma \}.\end{equation}Here, $\bar{k}^{\operatorname{stab}o}$ is the field fixed by the stabilizer $\operatorname{stab}o$. 

Given this data, we can twist $\overline{\mathcal{M}_{0,n}}$, $(\PP^2)^n$ and the evaluation map so that points with these residue fields correspond to a rational point of the twist of $(\PP^2)^n$. In a little more detail, $\sigma$ determines an action on  $(\PP_{\kbar}^2)^n$ by permutation which combines with the standard Galois action to form a twisted action $g(p_1,\ldots, p_n) = (g p_{\sigma^{-1}(1)}, \ldots, g p_{\sigma^{-1}(n)})$. Taking the invariants of this twisted action defines a $k$-scheme $(\PP^2)^n_\sigma$, which can be described as a restriction of scalars $$(\PP^2)^n_\sigma \cong \prod_{ o \text{ orbit of }\sigma}\operatorname{Res}^{\kbar^{\operatorname{stab}{o}}}_k \PP^2 .$$ 

One can also twist $\overline{\mathcal{M}_{0,n}}(\PP^2,d)$ and the evaluation map, resulting in a  twisted evaluation map $\operatorname{ev}_{\sigma}:\overline{\mathcal{M}_{0,n}}(\PP^2,d)_{\sigma}\rightarrow (\PP^2)_{\sigma}^n$. A collection $p_1,\ldots,p_n$ of Galois conjugate orbits with residue fields compatible with $\sigma$ (i.e. whose residue fields are \eqref{resfieldssigma}) is a rational point of $(\PP^2)_{\sigma}^n$. The fiber of $\operatorname{ev}_{\sigma}$ over such a point consists of the rational curves passing through the $p_i$.

With considerable work, it can be shown that $\operatorname{ev}_{\sigma}$ satisfies the hypothesis of Theorem \ref{thm: deg well defined} after removing a codimension $2$ subset of $(\PP^2)_{\sigma}^n$ and its preimage. It follows that there is a well-defined degree $\deg^{\A^1} \operatorname{ev}\in \GW(k)$.
By construction, this degree  $\deg^{\A^1} \operatorname{ev}\in \GW(k)$ is a sum over rational curves passing through the rational points $(p_1,\dots,p_n)$.

Let $N_{d,\sigma}:=\deg^{\A^1} \operatorname{ev}_{\sigma}$ giving the enriched rational curve count. A natural question to ask at this point is: 

\begin{question}
What are the local degrees $\deg^{\A^1}_u \operatorname{ev}_{\sigma}$ at a rational curve $u$? 
\end{question}

The answer has a geometric interpretation. The set of nodes of $u(C)$ are defined over $k(u)$. (An individual node could have a larger field of definition, but then it would come in a Galois orbit.) The tangent directions at these nodes (i.e. the lines of the tangent cones) determine a field extension $k(u) \subseteq L(u)$. The discriminant $\disc(L(u)/k(u)) \in k(u)^*/(k(u)^*)^2$ of the extension $k(u) \subseteq L(u)$ is the discriminant of the transfer $\Tr_{L(u)/k(u)}\langle 1 \rangle$, or in other words the determinant of a Gram matrix corresponding to this form. We have $$ \deg^{\A^1}_u \operatorname{ev}_{\sigma} = \Tr_{k(u)/k} \disc(L(u)/k(u)).$$ 

We can match this up with Welschinger's theorem by interpreting $$\prod_{p \in \Nodes(u)} \type(p) \langle -1 \rangle$$ as an element of $\GW(k(u))$. While this is an abuse of notation, as $\type(p)$ may lie in a larger field, after taking the product with the types of the Galois conjugates, we arrive at the norm and an element of $\GW(k(u))$. Comparing the definition of the type with the discriminant, we wee that $\disc(L(u)/k(u)) = \prod_{p \in \Nodes(u)} \type(p) \langle -1 \rangle$, with our particular definition of the type. To make this prettier, define the {\em mass} $m(p)$ of a node $p$ by $$m(p) = \type(p) \langle -1 \rangle .$$ Combining the above, we obtain the following generalization of Welschinger's theorem:

\begin{theorem}
(Kass--Levine--Solomon--W.) Let $k$ be a field of characteristic not $2$ or $3$. Let $d \geq 1$, and fix the data of the field extensions in a Galois stable set of $3d-1$ points over the algebraic closure. We use a permutation representation $\sigma:\operatorname{Gal}(\bar{k}/k)\rightarrow \Sigma_n$ for this. Then for any generic points $p_1,\ldots,p_{3d-1}$ of $\PP^2(\kbar)$ permuted by $\sigma$, we have the equality in $\GW(k)$ $$N_{\sigma, d} = \sum_{\substack{u \text{ degree } d \\ \text{ rational curve} \\\text{ through } p_1, \ldots p_n}}\Tr_{k(u)/k} \prod_{p \text{ node}\text{ of }u}  m(p)$$
\end{theorem}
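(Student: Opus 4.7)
The plan is to derive the formula by applying Theorem \ref{thm: deg well defined} to the twisted evaluation map $\ev_\sigma$, reducing $N_{\sigma,d}$ to a sum of local $\A^1$-degrees, and then computing each local degree in terms of the tangent directions at the nodes of the corresponding rational curve.

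First I would verify that the hypotheses of Theorem \ref{thm: deg well defined} apply. Both source and target of $\ev_\sigma$ are smooth and proper of common dimension $2(3d-1)$ after removing a codimension-$\geq 2$ locus in $(\PP^2)_\sigma^n$ corresponding to stable maps with degenerate domain, non-immersive singularities worse than nodes, or coincident marked points. The target $(\PP^2)_\sigma^n \cong \prod_{o} \operatorname{Res}^{\kbar^{\operatorname{stab} o}}_k \PP^2$ is $\A^1$-chain connected, being a product of restrictions of scalars of $\PP^2$. The differential $T\ev_\sigma$ is invertible at some point because the classical count $N_d$ is nonzero over $\C$, so $\ev_\sigma$ is generically \'etale. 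Relative orientability away from a codimension-$2$ locus is the substantial preparatory step: one identifies $\det T\overline{\mathcal{M}_{\PP^2,n}}(0,d)_\sigma$ and $\ev_\sigma^\ast \det T(\PP^2)_\sigma^n$ through the deformation-obstruction complex of stable maps and shows their difference is naturally a square of a line bundle. Granting this, the Corollary after Theorem \ref{thm: deg well defined} yields
\[
N_{\sigma,d} \;=\; \sum_{u \in \ev_\sigma^{-1}(p_1,\ldots,p_n)} \deg^{\A^1}_u \ev_\sigma \;=\; \sum_u \Tr_{k(u)/k}\langle Jf(u)\rangle.
\]

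The heart of the proof, and the main obstacle, is the identification $\langle Jf(u)\rangle = \disc(L(u)/k(u))$ in $\GW(k(u))$, where $L(u)$ denotes the field of definition of the tangent directions at the nodes of $u(\PP^1)$. Deformation theory identifies $T_u\overline{\mathcal{M}_{\PP^2,n}}(0,d)_\sigma$ with first-order deformations of the stable map $(u;p_1,\ldots,p_n)$, and the differential of $\ev_\sigma$ evaluates such a deformation at the marked points. I would work locally near each node of $u(\PP^1)$: after choosing coordinates on $\PP^1$ and $\PP^2$ that separate deformations smoothing a node from deformations moving marked points along $u(\PP^1)$, the Jacobian determinant of $\ev_\sigma$ factors as a product over the nodes up to a square coming from the marked-point positions. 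At a node $p$ with defining equation in normal form $a_1 x_1^2 + a_2 x_2^2 + \text{(higher order)}$, the contribution is $\langle a_1 a_2 \rangle$, which by the Example on types of nodes in Section \ref{subsection: A1 Milnor numbers} equals $\type(p)\langle -1\rangle$. Multiplying over a Galois-stable collection of nodes and using multiplicativity of discriminants for composita of quadratic extensions yields $\langle Jf(u)\rangle = \disc(L(u)/k(u)) = \prod_{p \in \Nodes(u)} \type(p)\langle -1\rangle$.

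Finally, substituting $m(p) = \type(p)\langle -1\rangle$ transforms the sum-of-local-degrees formula into the claimed identity
\[
N_{\sigma,d} \;=\; \sum_{u} \Tr_{k(u)/k} \prod_{p \in \Nodes(u)} m(p).
\]
The main difficulty throughout is the geometric content: constructing the relative orientation on $\ev_\sigma$ and carrying out the explicit Jacobian calculation near a nodal rational curve. Both require careful analysis of the Kontsevich moduli space of genus-zero stable maps and are not purely formal; the characteristic hypothesis $\characteristic k \neq 2, 3$ enters here, controlling both the definition of nodes and the automorphisms of stable maps that could interfere with smoothness of the relevant loci.
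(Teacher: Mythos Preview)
Your proposal follows the same strategy as the paper. Indeed, the paper does not give a self-contained proof of this theorem: it is announced as ongoing work of Kass--Levine--Solomon--Wickelgren, and the surrounding text only outlines the argument---verify that $\ev_\sigma$ satisfies the hypotheses of Theorem~\ref{thm: deg well defined} after removing a codimension-$2$ locus, express $N_{\sigma,d}$ as a sum of local $\A^1$-degrees, identify $\deg^{\A^1}_u \ev_\sigma = \Tr_{k(u)/k}\disc(L(u)/k(u))$, and rewrite via $m(p) = \type(p)\langle -1\rangle$. Your proposal reproduces exactly this outline and, appropriately, flags the two substantive steps (relative orientability of $\ev_\sigma$ and the Jacobian computation at a nodal curve) as the genuine work that the paper defers to the forthcoming article.
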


\begin{remark}
Note that $N_{\sigma, d}$ only depends on the field extension types of the points $p_i$. When the degrees of all these field extensions is $\leq 3$, M. Levine showed this in \cite[Example 3.9]{Levine-Welschinger}. 
\end{remark}

We end with some small examples.

\begin{example}
If $d=1,2$ we get $N_{d,\sigma}=\langle 1\rangle$ for all $\sigma$.
\end{example}

\begin{example}
Let $d=3$, and suppose the field extension types of points permuted by $\sigma$ are all separable over $k$. Using M. Levine's count of 1-nodal curves in a pencil \cite[Corollary 12.4]{levine2017enumerative}, one can compute
\[N_{d,\sigma}=2(\langle 1\rangle+\langle -1\rangle)+\Tr_{k(\sigma)/k}\langle 1\rangle\]
where $\Tr_{k(\sigma)/k}$ is the sum of the trace forms
\[\sum_{x\in  \operatorname{orbit}(\sigma)}\Tr_{\bar{k}^{\operatorname{stab}x}/k}\langle1\rangle.\] Equivalently, if $D=(p_1,\dots,p_n)$ is a divisor on $C$ over $k$ permuted as in $\sigma$, such that all $p_i$ are distinct and defined over separable field extensions, then $D$ defines a finite \'{e}tale algebra over $k$ and
\[\Tr_{k(\sigma)/k}\langle 1\rangle=\text{trace form of }D.\]
\end{example}

\begin{remark}
$N_{d,\sigma}$ is not just $\langle1\rangle's$ and $\langle-1\rangle's$, and depends on $\sigma$.
\end{remark}

\begin{table}
\caption{$\GW(k)$-enriched counts of rational curves}
\label{tab:3}       
\begin{tabular}{lll}
\hline\noalign{\smallskip}
 $d$& $\sigma$ & $N_{d,\sigma}=$ count of rational curves \\
\noalign{\smallskip}\hline\noalign{\smallskip}
 1 & all $\sigma$ & $\langle 1 \rangle$  \\
 2 & all $\sigma$ & $\langle 1 \rangle$  \\
 3& $\sigma$ corresponding to separable $k \subseteq k(p_i)$ &$ 2(\langle 1\rangle+\langle -1\rangle)+\Tr_{k(\sigma)/k}\langle 1\rangle $\\
  $\dots$&$\dots$&$\dots$\\
\noalign{\smallskip}\hline
\end{tabular}
\end{table}

\section*{Ackknowledgements}
Kirsten Wickelgren was partially supported by NSF CAREER grant DMS-2001890. Sabrina Pauli gratefully acknowledges support by the RCN Frontier Research Group Project no.  250399 “Motivic Hopf Equations.” We also wish to thank Joe Rabinoff. 

%
%

\bibliographystyle{spmpsci}      
\bibliography{PIMSbib}   

%
%

\end{document}